\def\e{\epsilon}
\def\l{\lambda}
\newcommand{\Z}{\mathbb{Z}}
\def\mod{\mathop{\rm{mod}}}
\def\exp{{\mathrm{exp}}}
\def\spt{{\mathrm{spt}}}
\def\l{{\lambda}}
\numberwithin{equation}{section}
\newtheorem{theorem}{Theorem}[section]  %%with section numbering
\newtheorem{proposition}[theorem]{Proposition}
\newtheorem{lemma}[theorem]{Lemma}
\newtheorem{corollary}[theorem]{Corollary}
\newtheorem*{conjecture}{Conjecture}
\newtheorem*{thm}{Theorem}	%%without numbering
\theoremstyle{definition}
\theoremstyle{remark}
\newcommand{\shortmod}{\ensuremath{\negthickspace \negthickspace \negthickspace \pmod}}
\newtheorem{ind}[]{{\rm\it Indice}}
\title{Effective bounds for the Andrews spt-function}
\author[Locus Dawsey]{Madeline Locus Dawsey*}
\address{Department of Mathematics and Computer Science,
Emory University, Atlanta, GA 30322}
\email{madeline.locus@emory.edu}
\author[Masri]{Riad Masri}
\address{Department of Mathematics, Mailstop 3368, Texas A\&M University, College Station, TX 77843-3368}
\email{masri@math.tamu.edu}
\begin{document}

\thanks{*This author was previously known as Madeline Locus.}
%\subjclass[2010]{11P82, 11F30}
%\keywords{partition function, $\spt$-function, Chen's conjectures}

\begin{abstract}
In this paper, we establish an asymptotic formula with an effective bound on the error term for 
the Andrews smallest parts function $\spt(n)$. We use this formula to prove recent conjectures 
of Chen concerning inequalities which involve the partition function $p(n)$ and $\spt(n)$. 
Further, we strengthen one of the conjectures, and prove that for every $\e>0$ there is an effectively computable constant $N(\e) > 0$ 
such that for all $n\geq N(\e)$, we have
\begin{equation*}
\frac{\sqrt{6}}{\pi}\sqrt{n}\,p(n)<\spt(n)<\left(\frac{\sqrt{6}}{\pi}+\e\right) \sqrt{n}\,p(n).
\end{equation*}
Due to the conditional convergence of the Rademacher-type formula for $\spt(n)$, we must employ methods which are completely 
different from those used by Lehmer to give effective error bounds for $p(n)$. Instead, our approach relies on 
the fact that $p(n)$ and $\spt(n)$ can be expressed as traces of singular moduli.
\end{abstract}

\maketitle

%\vspace{0.40in}

\section{Introduction and Statement of Results}\label{s1}

The smallest parts function $\spt(n)$ of Andrews is defined for any integer $n\geq1$ as the number of 
smallest parts among the integer partitions of size $n$.  For example, the partitions of $n=4$ are (with the smallest parts underlined):
\begin{align*}
&\underline{4},\\
&3+\underline{1},\\
&\underline{2}+\underline{2},\\
&2+\underline{1}+\underline{1},\\
&\underline{1}+\underline{1}+\underline{1}+\underline{1},
\end{align*}
and so $\spt(4)=10$.  The spt-function has many remarkable properties. For example, 
Andrews \cite{Andrews} proved the following analogues of the well-known Ramanujan congruences for the partition function $p(n)$:
\begin{align*}
\spt(5n+4)&\equiv0\pmod{5},\\
\spt(7n+5)&\equiv0\pmod{7},\\
\spt(13n+6)&\equiv0\pmod{13}.
\end{align*}

One can compute $\spt(n)$ by making use of the generating function
\begin{equation*}
\sum_{n=1}^{\infty}\spt(n)q^n=\sum_{n=1}^\infty\frac{q^n}{\left(1-q^n\right)^2\left(q^{n+1};q\right)_\infty},
\end{equation*}
where 
\begin{align*}
(a;q)_\infty=\prod_{n=0}^\infty\left(1-aq^n\right).  
\end{align*}
We use this generating function to compute the values of $\spt(n)$ required for this paper.

In this paper, we will prove the following asymptotic formula for $\spt(n)$ with an effective bound on the error term.

\begin{theorem}\label{riad} Let $\l(n):=\pi \sqrt{24n-1}/6$. Then for all $n \geq 1$, we have
\begin{equation*}
\spt(n)=\frac{\sqrt{3}}{\pi\sqrt{24n-1}}e^{\l(n)} + E_s(n), 
%e^{\m(n)}+E_s(n),
\end{equation*}
where
\begin{equation*}
\left|E_s(n)\right| <  (3.59 \times 10^{22})2^{q(n)}(24n-1)^2e^{\l(n)/2}
%(124000002265) (24n-1) e^{\mu(n)/2}.
%e^{\m(n)/2}.
\end{equation*}
with 
\begin{align*}
q(n):=\frac{\log(24n-1)}{|\log(\log(24n-1))-1.1714|}.
\end{align*}
\end{theorem}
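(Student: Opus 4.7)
The plan is to attack this through an exact formula expressing $\spt(n)$ as a finite sum over $\Gamma_0(N)$-equivalence classes of positive definite integral binary quadratic forms $Q=[a,b,c]$ of discriminant $1-24n$, in the spirit of the Bruinier–Ono trace-of-singular-moduli expansion for $p(n)$. Such a formula, which should be obtainable by applying a Maass-lowering operator to a suitable weak Maass form of weight $3/2$ (with $\spt(n)$ naturally playing the role of a Fourier coefficient), represents $\spt(n)$ as a weighted trace of singular values. The analytic control of the individual terms comes from the fact that each CM point $\alpha_Q$ has imaginary part $\sqrt{24n-1}/(2a)$, and the Fourier expansion of the relevant Maass form provides explicit asymptotic and subleading information.

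Given such a formula, I would isolate the contribution of the principal form $Q_0 = [1,1,c_0]$, whose CM point has the maximal imaginary part $\sqrt{24n-1}/2$. Retaining only the leading Fourier coefficient of the Maass expansion at $\alpha_{Q_0}$ should produce the main term
\[
\frac{\sqrt{3}}{\pi\sqrt{24n-1}} e^{\lambda(n)},
\]
while all remaining contributions fall into one of two buckets: (i) the tail of the $q$-expansion at $\alpha_{Q_0}$, dominated by the next Fourier mode $e^{-2\pi y_{Q_0}\cdot \text{(gap)}}$, which decays like $e^{\lambda(n)/2}$ or better once the exponent is tracked carefully; (ii) the contributions from non-principal reduced forms with $a\geq 2$, where $\mathrm{Im}(\alpha_Q)\leq \sqrt{24n-1}/4$, so the entire Maass-form value is trivially bounded by $e^{\lambda(n)/2}$ times an explicit polynomial factor in $24n-1$ coming from the holomorphic coefficient growth and the incomplete gamma factors appearing in the non-holomorphic part.

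The factor $2^{q(n)}$ is accounted for by a count of the reduced forms. One bounds the class number $h(1-24n)$ by (a constant times) $d(24n-1)$, and then applies a fully explicit version of Wigert's theorem — Nicolas–Robin's bound of the shape $d(m)\leq 2^{\log m/(\log\log m - c)}$ with an explicit admissible $c$, whose value $1.1714$ matches the shape of $q(n)$ in the statement. This inequality is valid provided $24n-1$ is large enough, which is why the enormous overall constant $3.59\times 10^{22}$ must absorb the verification for small $n$; for those I would fall back on direct numerical evaluation of $\spt(n)$ via the stated generating function, confirming the inequality case by case until the asymptotic regime takes over.

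The main obstacle is the accounting of explicit constants: assembling a uniform inequality valid for all $n\geq 1$ requires fully explicit control of (a) the Fourier coefficient bounds and the incomplete gamma contributions in the Maass expansion, (b) the relationship between $h(1-24n)$ and $d(24n-1)$, and (c) the transition between the "asymptotic" range where the Wigert-type bound applies cleanly and the small-$n$ range where direct computation is required. Threading these three explicit estimates together — especially showing that the leading Fourier tail at $\alpha_{Q_0}$ really is governed by $e^{\lambda(n)/2}$ and not a larger exponential — is the delicate analytic step, and it is precisely the feature that the conditional Rademacher series is ill-equipped to supply, motivating the singular-moduli approach in the first place.
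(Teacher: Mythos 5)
There is a genuine gap in how you extract the main term, and it is structural rather than a matter of constants. First, the scale of your ``principal form'' contribution is inconsistent with the claimed main term: a CM point with imaginary part $\sqrt{24n-1}/2$ paired with a principal part $q^{-1}$ would contribute an exponential of size $e^{\pi\sqrt{24n-1}}=e^{6\lambda(n)}$, not $e^{\lambda(n)}$. The forms relevant to $\spt(n)$ live on $\Gamma_0(6)$ with $6\mid a$ and $b\equiv 1\pmod{12}$; equivalently, after the Gross--Kohnen--Zagier/Dewar--Murty bijection with level-one reduced forms, each form $Q$ is weighted by the width $h_Q\in\{1,2,3,6\}$ of the cusp its coset representative hits, and the exponential attached to $Q$ is $\exp\bigl(\pi\sqrt{24n-1}/(a_Q h_Q)\bigr)$. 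The top order $e^{\lambda(n)}$ is attained exactly when $a_Q h_Q=6$, and there are four such forms, $[1,1,6n]$, $[2,1,3n]$, $[3,1,2n]$, $[6,1,n]$, each contributing at the same exponential order with a sixth-root-of-unity coefficient; in the paper these four terms together give $2\sqrt{3}\,e^{\lambda(n)}$ for the trace of $f$. Your plan keeps only $[1,1,c_0]$ and consigns every form with $a\geq 2$ to an error of size $e^{\lambda(n)/2}$ times a polynomial, which is false: three of the four dominant forms have $a\in\{2,3,6\}$.

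Second, even with all four top forms retained, a direct ``leading Fourier coefficient'' extraction gives a quantity of size $2\sqrt{3}\,e^{\lambda(n)}/12$, which exceeds the true main term $\frac{\sqrt{3}}{\pi\sqrt{24n-1}}e^{\lambda(n)}$ by a factor of order $\lambda(n)\sim\sqrt{n}$. The actual main term arises from a cancellation that your sketch never confronts: the Ahlgren--Andersen formula expresses $\spt(n)$ as $\frac{1}{12}\sum_{[Q]}\bigl(f(\tau_Q)-P(\tau_Q)\bigr)$, and by Bruinier--Ono the second trace equals $(24n-1)p(n)$. The paper accordingly writes $\spt(n)=\frac{1}{12}\bigl[S(n)-(24n-1)p(n)\bigr]$, proves $S(n)=2\sqrt{3}\,e^{\lambda(n)}+E(n)$ with $E(n)$ explicitly bounded by a multiple of $2^{q(n)}(24n-1)^2e^{\lambda(n)/2}$, and inserts Lehmer's effective expansion $p(n)=\frac{2\sqrt{3}}{24n-1}\bigl(1-\frac{1}{\lambda(n)}\bigr)e^{\lambda(n)}+E_p(n)$ with $|E_p(n)|\leq 1313\,e^{\lambda(n)/2}$: the $2\sqrt{3}\,e^{\lambda(n)}$ terms cancel, and the surviving main term $\frac{2\sqrt{3}}{12\,\lambda(n)}e^{\lambda(n)}=\frac{\sqrt{3}}{\pi\sqrt{24n-1}}e^{\lambda(n)}$ comes from the subleading $1/\lambda(n)$ piece, i.e.\ from the nonholomorphic part of the weak Maass form $P$ at the Heegner points. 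Since you assign the nonholomorphic (incomplete-gamma) contributions to the error bucket, the mechanism that produces the stated main term is missing from your argument. Your class-number ingredient is right in spirit, though in the paper the factor $2^{q(n)}$ comes from bounding $2^{\omega(24n-1)}$ via Robin's explicit estimate for $\omega$ (this is where $1.1714$ enters), combined with $\tau(24n-1)<\sqrt{3}\sqrt{24n-1}$ and $h(d_n)\leq |d_n|$, rather than from a Wigert-type divisor bound alone.
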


\vspace{0.10in}

Our interest in proving the effective error bounds of Theorem \ref{riad} was motivated in part by 
the following recent conjectures of Chen \cite{C} concerning inequalities which involve $p(n)$ and $\spt(n)$.

\begin{conjecture}[Chen]\label{Chen}
\item
\begin{enumerate}
\item\label{1} For $n\geq5$, we have $$\frac{\sqrt{6}}{\pi}\sqrt{n}\,p(n)<\spt(n)<\sqrt{n}\,p(n).$$
\item\label{2} For $(a,b)\neq(2,2)$ or $(3,3)$, we have $$\spt(a)\,\spt(b)>\spt(a+b).$$
\item\label{3} For $n\geq36$, we have $$\spt(n)^2>\spt(n-1)\,\spt(n+1).$$
\item\label{4} For $n>m>1$, we have $$\spt(n)^2>\spt(n-m)\,\spt(n+m).$$
\item\label{5} For $n\geq13$, we have $$\frac{\spt(n-1)}{\spt(n)}\left(1+\frac{1}{n}\right)>\frac{\spt(n)}{\spt(n+1)}.$$
\item\label{6} For $n\geq73$, we have $$\frac{\spt(n-1)}{\spt(n)}\left(1+\frac{\pi}{\sqrt{24}n^{3/2}}\right)>\frac{\spt(n)}{\spt(n+1)}.$$
\end{enumerate}
\end{conjecture}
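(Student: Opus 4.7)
The plan is to combine Theorem \ref{riad} with an effective form of Rademacher's series for $p(n)$ (for instance, Lehmer's classical estimates) to reduce each of the six inequalities in Conjecture \ref{Chen} to an asymptotic inequality for an explicit smooth main term, plus a finite numerical check for small $n$.

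First I would extract from Theorem \ref{riad} and the effective formula for $p(n)$ the identity
\begin{equation*}
\frac{\spt(n)}{\sqrt{n}\, p(n)} \;=\; \frac{\sqrt{6}}{\pi} + R(n),
\end{equation*}
where $R(n)$ is bounded by the subexponential factor of Theorem \ref{riad} multiplied by $e^{-\l(n)/2}$, with $\l(n)=\pi\sqrt{24n-1}/6$. For any $\e>0$ this produces an effectively computable threshold $N(\e)$ such that $|R(n)|<\e$ whenever $n\geq N(\e)$. Since $\sqrt{6}/\pi<1$, choosing $\e<1-\sqrt{6}/\pi$ yields both inequalities in part (1) and the strengthened statement announced in the abstract. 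The finite range $5\leq n<N(\e)$ is verified by direct computation from the $\spt$ generating function.

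Parts (2)--(6) are all relations among products or ratios of $\spt$-values at nearby arguments, and can be attacked uniformly. Set
\begin{equation*}
f(k):=\frac{\sqrt{3}}{\pi\sqrt{24k-1}}\,e^{\l(k)},
\end{equation*}
so that Theorem \ref{riad} gives $\spt(k)=f(k)+E_s(k)$ with $|E_s(k)|$ exponentially smaller than $f(k)$. Part (2) reduces to $\l(a)+\l(b)>\l(a+b)$ up to a polynomial correction, which follows from strict concavity of $\sqrt{x}$. Parts (3) and (4) reduce to strict concavity of $\log f$ as a smooth function of its argument, an elementary calculus check based on
\begin{equation*}
\frac{d^2}{dx^2}\log f(x) \;=\; \frac{288}{(24x-1)^2} \;-\; \frac{24\pi}{(24x-1)^{3/2}},
\end{equation*}
which is negative once $x\geq 1$. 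Parts (5) and (6) are quantitative refinements of log-concavity of the form $2\log f(n)-\log f(n-1)-\log f(n+1)<c(n)$ for the explicit test function $c(n)$ appearing in the conjecture; these follow from Taylor-expanding $\l(n\pm 1)$ about $n$ and retaining enough terms to dominate the error from Theorem \ref{riad}. In every case, once the smooth inequality is established for $n\geq N_0$, the argument is completed by a finite numerical check down to the conjectured cutoff.

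The main obstacle is purely quantitative: the error in Theorem \ref{riad} carries the large factor $2^{q(n)}(24n-1)^2$, so the naive asymptotic thresholds $N(\e)$, $N_0$, etc.\ sit well above the conjectural cutoffs $n\geq 5,\,36,\,13,\,73$. The real work is to arrange each estimate tightly enough that the residual finite check is computationally feasible, and then to execute that check from the $\spt$ generating function together with standard algorithms for computing $p(n)$.
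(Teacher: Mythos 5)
Your overall strategy coincides with the paper's: combine Theorem \ref{riad} with Lehmer's effective formula for $p(n)$, reduce each inequality to a smooth inequality for the main term $f(n)=\frac{\sqrt{3}}{\pi\sqrt{24n-1}}e^{\l(n)}$ with the error dominated, and finish with computer checks. But there are two concrete gaps. First, in part (\ref{1}) your reduction ``$\spt(n)/(\sqrt{n}\,p(n))=\sqrt{6}/\pi+R(n)$ with $|R(n)|<\e$'' only yields the \emph{upper} bound $\spt(n)<\sqrt{n}\,p(n)$; it cannot give the strict lower bound with the exact constant $\sqrt{6}/\pi$, since $|R(n)|<\e$ allows $R(n)<0$. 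Moreover $R(n)$ is \emph{not} bounded by the exponentially small error factor: the main terms of $\spt(n)$ and $\sqrt{n}\,p(n)$ are not exactly proportional with ratio $\sqrt{6}/\pi$, and the lower bound is saved precisely by the sign of that polynomially small discrepancy. This is what the paper's proof of Theorem \ref{better} does by checking $c_1(n)=\alpha(n)-\beta(n)\gamma(n)>0$ for all $n\ge 1$ and comparing $c_1(n)e^{\l(n)}$ against the $e^{\l(n)/2}$ error; you need that positivity argument, not just smallness of $R(n)$.

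Second, parts (\ref{2}) and (\ref{4}) are two-variable statements, and your claim that ``once the smooth inequality holds for $n\ge N_0$, a finite numerical check down to the conjectured cutoff finishes'' fails there: the exceptional regions are \emph{infinite} strips. In (\ref{2}) the case of fixed small $a$ with $b$ unbounded, and in (\ref{4}) the case $1\le n-m\le 36$ (indeed any $n-m$ below the threshold where the error in Theorem \ref{riad} is smaller than the main term, roughly $n-m\lesssim 5700$) with $m$ unbounded, cannot be dispatched by finitely many evaluations, and at the small argument the asymptotic-plus-error formula is useless because the stated error bound exceeds the main term. The paper fills these holes explicitly: for (\ref{2}) it first establishes the two-sided bounds (\ref{boundseq}) for \emph{all} $n\ge1$ (Theorem \ref{bounds} plus a verification for $n<5729$), then runs the Bessenrodt--Ono argument with monotonicity in $C=b/a$ and the finite list of thresholds $C_a$ for $2\le a\le 5$; for (\ref{4}) it uses discrete log-concavity (part (\ref{3})) together with Sagan's strong log-concavity to cover $n-m>36$, and for $1\le n-m\le 36$ proves the chain (\ref{string}) using the exact value $\spt(36)$, the Hardy--Ramanujan and Erd\H{o}s bounds for $p(n)$, and part (\ref{1}). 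Note also that for (\ref{3}), (\ref{5}), (\ref{6}) mere negativity of $\frac{d^2}{dx^2}\log f$ is not enough: you must prove quantitative two-sided bounds on the second difference (as in the DeSalvo--Pak lemmas) so that the margin beats the relative errors $|E_s(n)|/f(n)$, and in (\ref{6}) the leading terms $\frac{\pi}{\sqrt{24}\,n^{3/2}}$ on both sides agree exactly, so the comparison must be carried to the next order; your ``Taylor-expand and retain enough terms'' is the right idea but is exactly where the real work lies.
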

\vspace{0.10in}

\textbf{Remark}.
Conjectures (\ref{1}) and (\ref{2}) are slight modifications of Chen's original claims.  

\vspace{0.10in}

By combining Theorem \ref{riad} with classical work of Lehmer \cite{L} which gives effective error bounds for $p(n)$, we will prove the following result. 

\begin{theorem}\label{main}
All of Chen's conjectures are true.
\end{theorem}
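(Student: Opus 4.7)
The plan is to reduce each of Chen's six conjectures to (i) an analytic proof that it holds for all $n$ beyond some explicit threshold $N_i$, using Theorem \ref{riad} together with Lehmer's effective error bounds for $p(n)$, and (ii) a finite computer verification for $n < N_i$ using the generating function for $\spt(n)$ given in the introduction and the standard recurrence for $p(n)$. The unifying point is that both $\spt(n)$ and $p(n)$ are now known with fully explicit main terms and effective error estimates, so every inequality in the list becomes an elementary inequality between $\lambda(n)$-type expressions plus a controlled error.

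For the asymptotic step, write $\spt(n)=M_s(n)+E_s(n)$ with $M_s(n):=(\sqrt{3}/(\pi\sqrt{24n-1}))e^{\lambda(n)}$, and $p(n)=M_p(n)+E_p(n)$ analogously via Lehmer. A direct computation shows $M_s(n)\sim (\sqrt{6}/\pi)\sqrt{n}\,M_p(n)$, which gives the lower bound in Conjecture (1) with room to spare and shows the upper bound holds with a factor of $\pi/\sqrt{6}>1$ of slack. For Conjecture (2), observe that $\lambda(a)+\lambda(b)-\lambda(a+b)\to\infty$ as $a+b\to\infty$ when $a,b\geq 2$, so $M_s(a)M_s(b)/M_s(a+b)$ blows up and the inequality holds outside a finite range of $(a,b)$. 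For the log-concavity-type inequalities (3)--(6), take logarithms: since $\lambda(n)=\pi\sqrt{24n-1}/6$ is strictly concave in $n$, the second difference $\lambda(n+1)-2\lambda(n)+\lambda(n-1)$ is negative and of order $n^{-3/2}$. Comparing this against the explicit perturbations $\log(1+1/n)$ and $\log(1+\pi/(\sqrt{24}\,n^{3/2}))$ appearing in (5) and (6), and using that $E_s(n)/M_s(n)$ is eventually much smaller than $n^{-3/2}$, yields (3), (5), (6) for $n$ past an explicit threshold. Conjecture (4) then follows from (3) by telescoping the log-concavity relation across $m$ consecutive differences.

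The main obstacle is the size of the resulting thresholds $N_i$. The constant $3.59\times 10^{22}$ together with the factor $2^{q(n)}$ means the analytic argument only takes over for $n$ fairly large; one must then compute $\spt(n)$ and $p(n)$ for all $n\leq N_i$ and check each inequality directly. Because $|E_s(n)|/M_s(n)\ll 2^{q(n)}n^{5/2}e^{-\lambda(n)/2}$ and $\lambda(n)\sim \pi\sqrt{2n/3}$, a careful but essentially routine numerical analysis produces an explicit and computationally feasible $N_i$ for each conjecture, after which the finite case-check completes the proof of Theorem \ref{main}.
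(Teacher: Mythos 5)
Your overall architecture --- effective main term plus error from Theorem \ref{riad} and Lehmer, explicit thresholds, then finite computer checks --- is the same as the paper's, but several of your analytic reductions have genuine gaps. Most seriously, the lower bound in Conjecture (1) is not won ``with room to spare'': since $\spt(n)\sim(\sqrt{6}/\pi)\sqrt{n}\,p(n)$, the constant $\sqrt{6}/\pi$ is exactly the limiting ratio, so the inequality is decided at second order, not at leading order. Indeed, if you use only the leading main term $\tfrac{2\sqrt{3}}{24n-1}e^{\lambda(n)}$ for $p(n)$, then $(\sqrt{6}/\pi)\sqrt{n}$ times it equals $\tfrac{\sqrt{72n}}{\pi(24n-1)}e^{\lambda(n)}$, which \emph{exceeds} the main term $\tfrac{\sqrt{72n-3}}{\pi(24n-1)}e^{\lambda(n)}$ of $\spt(n)$; the paper must retain Lehmer's secondary factor $\bigl(1-1/\lambda(n)\bigr)$ (Lemma \ref{plemma}) to make the main-term difference $c_1(n)$ positive. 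A similar second-order issue occurs in Conjecture (6): the perturbation $\log\bigl(1+\pi/(\sqrt{24}\,n^{3/2})\bigr)$ has exactly the same leading constant as the second difference of $\log$ of the main term, so ``$E_s(n)/M_s(n)$ much smaller than $n^{-3/2}$'' is not sufficient; the paper needs the CWX-type refinements, the negative term $-288/\bigl(24(n+1)-1\bigr)^2$, and the inequality $x(1-x)<\log(1+x)$ to squeeze out the comparison. Also, in Conjecture (2) your claim that $\lambda(a)+\lambda(b)-\lambda(a+b)\to\infty$ is false when $\min(a,b)$ stays bounded: for fixed $a$ it tends to the finite limit $\lambda(a)$. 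The conclusion (finitely many exceptional pairs) is still true, but proving it requires the finite-margin analysis the paper carries out, with $T_a(C)$ increasing and $S_a(C)$ decreasing in $C=b/a$ and explicit thresholds $C_a$ for $a=2,\dots,5$.

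The telescoping step for Conjecture (4) is also incomplete. Strong log-concavity converts (3) into $\spt(n)^2>\spt(n-m)\spt(n+m)$ only when the entire chain of indices lies in the range where (3) holds, i.e.\ when $n-m>36$; log-concavity genuinely fails at some small indices. The remaining cases $1\le n-m\le 36$ with $m$ arbitrarily large form an \emph{infinite} family that cannot be absorbed into your finite computation. The paper handles them with the separate chain $\spt(n)^2\ge\spt(m+1)^2>\spt(36)\,\spt(36+2m)\ge\spt(n-m)\,\spt(n+m)$ for $m\ge 6244$, proved via explicit Hardy--Ramanujan and Erd\H{o}s bounds for $p(n)$ combined with the already-established Conjecture (1), plus a computer check for $m<6244$. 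So the plan is sound in outline, but each of these four points --- the sharp constant in (1), the bounded-$\min(a,b)$ regime in (2), the second-order comparison in (6), and the small-gap case in (4) --- needs an additional idea beyond what you wrote.
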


We will also use Theorem \ref{riad} to 
prove the following more precise version of Theorem \ref{main} regarding Conjecture (\ref{1}).  

\begin{theorem}[Refined Theorem \ref{main} (\ref{1})]\label{better}
For each $\e>0$, there is an effectively computable constant $N(\e) > 0$ 
such that for all $n\geq N(\e)$, we have 
\begin{align*}
\frac{\sqrt{6}}{\pi} \sqrt{n} p(n)< \spt(n) <\left(\frac{\sqrt{6}}{\pi}+\epsilon \right) \sqrt{n} p(n).
\end{align*}
\end{theorem}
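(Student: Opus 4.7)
The plan is to combine Theorem \ref{riad} with Lehmer's classical effective bound \cite{L} for $p(n)$, which truncates Rademacher's series at its first term:
\[
p(n) = M_p(n) + E_p(n), \qquad M_p(n) = \frac{2\sqrt{12}}{24n-1}\left(\cosh(\l(n)) - \frac{\sinh(\l(n))}{\l(n)}\right),
\]
with $|E_p(n)|$ bounded by an explicit polynomial in $n$ times $e^{\l(n)/2}$. Let also $M_s(n) := \frac{\sqrt{3}}{\pi\sqrt{24n-1}}\,e^{\l(n)}$ denote the main term in Theorem \ref{riad}. Since the subexponential factor $2^{q(n)}$ satisfies $2^{q(n)} = e^{o(\l(n))}$, both relative errors $|E_s(n)|/M_s(n)$ and $|E_p(n)|/M_p(n)$ decay like $e^{-\l(n)/2 + o(\l(n))}$, i.e.\ exponentially in $\sqrt{n}$.

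First I would compute the main-term ratio $R(n) := M_s(n)/(\sqrt{n}\,M_p(n))$ in closed form. Using $\sqrt{24n-1} = 6\l(n)/\pi$ and expanding $\cosh$ and $\sinh$ in exponentials yields
\[
R(n) = \frac{\sqrt{6}}{\pi}\cdot\sqrt{\frac{24n-1}{24n}}\cdot\frac{\l(n)}{\l(n)-1}\cdot\frac{1}{1 + \frac{\l(n)+1}{\l(n)-1}\,e^{-2\l(n)}}.
\]
The first and last factors are $<1$, while $\l(n)/(\l(n)-1)>1$ dominates them; squaring the resulting inequality reduces it to the elementary polynomial estimate $\pi^2(24n-1)^2 > 864\,n\,(\l(n)-1)^2$ modulo exponentially small corrections, which holds for every $n\geq 1$. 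A Taylor expansion in $1/\l(n)$ then gives the quantitative gap
\[
R(n) - \frac{\sqrt{6}}{\pi} = \frac{\sqrt{6}}{\pi\,\l(n)} + O\!\left(\frac{1}{\l(n)^{2}}\right),
\]
so $R(n) \downarrow \sqrt{6}/\pi$ at rate $1/\sqrt{n}$.

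Next, combining the two effective formulas gives
\[
\frac{\spt(n)}{\sqrt{n}\,p(n)} = R(n)\cdot\frac{1 + E_s(n)/M_s(n)}{1 + E_p(n)/M_p(n)} = R(n)\bigl(1 + \eta(n)\bigr),
\]
where $|\eta(n)|$ admits an explicit, exponentially decaying bound from Theorem \ref{riad} and Lehmer's estimate. Given $\e>0$, I would then choose $N(\e)$ effectively so that for all $n\geq N(\e)$ both $R(n) < \sqrt{6}/\pi + \e/2$ and $R(n)\,|\eta(n)| < \min\bigl(\e/2,\ R(n) - \sqrt{6}/\pi\bigr)$ hold simultaneously. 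Both thresholds are explicitly computable, since $R(n)$ is in closed form and the constants in Theorem \ref{riad} and in \cite{L} are explicit; the upper bound in the theorem then follows from both conditions, and the lower bound from the second alone.

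The main obstacle is the asymmetry between the two inequalities: the upper bound is a soft statement that only requires $R(n)$ to be close to $\sqrt{6}/\pi$, but the lower bound requires the exponentially small error perturbation $\eta(n)$ to be strictly dominated by the merely polynomially small gap $R(n) - \sqrt{6}/\pi \asymp 1/\sqrt{n}$. This is why one is forced to carry out a \emph{closed-form} expansion of $R(n)$ in $\l(n)$ rather than appealing to soft asymptotics, and why keeping the constants in Theorem \ref{riad} and in Lehmer's bound explicit throughout is essential for the effective computability of $N(\e)$.
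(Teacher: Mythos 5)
Your overall strategy is the paper's: compare the main term of Theorem \ref{riad} with $\sqrt{n}$ times the main term of Lehmer's formula, note that the main-term gap is of order $1/\l(n)\asymp n^{-1/2}$ while the error terms are exponentially smaller, and extract an effective $N(\e)$. (The paper phrases this additively, via the positivity of $c_1(n)=\alpha(n)-\beta(n)\gamma(n)$ and the inequality $c_1(n)e^{\l(n)}>\gamma(n)E_p(n)-E_s(n)$, rather than through your ratio $R(n)$, but your inequality $R(n)>\sqrt{6}/\pi$ and your expansion $R(n)-\sqrt{6}/\pi\sim\sqrt{6}/(\pi\l(n))$ encode exactly the same facts.)

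There is, however, one genuine gap in the step where you invoke Lehmer. You truncate Rademacher's series at its first term and assert that the resulting error satisfies $|E_p(n)|\ll (\text{polynomial in }n)\,e^{\l(n)/2}$. That does not follow from Lehmer's theorem (Theorem \ref{Lehmer}) applied with $N=1$: his remainder bound there is $|R_2(n,1)|\ll e^{\l(n)}/\l(n)^3$, which after dividing by the main term $\asymp e^{\l(n)}/\l(n)^2$ gives a guaranteed \emph{relative} error of roughly $3/\l(n)$ --- only polynomially small. This is fatal precisely at the delicate point you yourself single out: the lower bound requires the relative error to be dominated by the main-term gap, which is $\approx 1/\l(n)$, and a bound of $3/\l(n)$ exceeds that gap, so no admissible $N(\e)$ could be produced from it. The bound you want is true, but to obtain it effectively one must take $N=2$ in Lehmer's theorem and estimate the $c=2$ term trivially (using $|A_2(n)|\le 2$ and $I_{3/2}(\l(n)/2)\ll e^{\l(n)/2}$); this is exactly the paper's Lemma \ref{plemma}, which gives $|E_p(n)|\le 1313\,e^{\l(n)/2}$. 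With that lemma substituted for your $E_p$-bound, the rest of your argument (the closed form and positivity of $R(n)$, the $1/\l(n)$ expansion of the gap, and the two conditions defining $N(\e)$) is correct and reproduces the paper's proof.
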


\vspace{0.10in}

\textbf{Remark}.
The constant $N(\epsilon)$ of Theorem \ref{better} can be computed in practice. For example, 
by letting $\epsilon = 1 - \sqrt{6}/\pi$ in Theorem \ref{better}, we get 
Theorem \ref{main} (1) for $n \geq N(1-\sqrt{6}/\pi)$ with $N(1-\sqrt{6}/\pi)=5310$. We then 
use a computer to verify Theorem \ref{main} (1) in the exceptional range 
$5 \leq n < 5310$. 

\vspace{0.10in}

\textbf{Remark}. In analogy with the Hardy-Ramanujan asymptotic for $p(n)$, Bringmann \cite{B} used the circle method to 
establish the asymptotic 
\begin{equation*}
\spt(n) \sim \frac{1}{\pi\sqrt{8n}} e^{\pi\sqrt{\frac{2n}{3}}}
\end{equation*}
as $n\rightarrow\infty$. Bringmann's asymptotic for $\spt(n)$ implies that 
\begin{align}\label{sptpn}
\spt(n) \sim \frac{\sqrt{6}}{\pi} \sqrt{n} p(n)
\end{align}
as $n \rightarrow \infty$. Theorem \ref{better} refines the asymptotic (\ref{sptpn}).  
\vspace{0.10in}

We now describe our approach to Theorem \ref{riad}. In particular, we explain some of the difficulties involved in proving effective error bounds for $\spt(n)$. 

In \cite{R}, Rademacher established an exact formula for $p(n)$ as the absolutely convergent infinite sum
\begin{align}\label{rademacher}
p(n)=\frac{2\pi}{(24n-1)^{3/4}}\sum_{c=1}^\infty\frac{A_c(n)}{c}I_{3/2}\left(\frac{\pi\sqrt{24n-1}}{6c}\right),
\end{align}
where $I_{\nu}$ is the $I$-Bessel function, $A_c(n)$ is the Kloosterman-type sum
\begin{equation*}
A_c(n):=\sum_{\substack{d \shortmod c\\[1pt](d,c)=1}}e^{\pi is(d,c)}e^{-\frac{2\pi idn}{c}},
\end{equation*}
and $s(d,c)$ is the classical Dedekind sum
\begin{align*}
s(d,c):=\sum_{r=1}^{c-1}\frac{r}{c}\left(\frac{dr}{c}-\left\lfloor\frac{dr}{c}\right\rfloor-\frac{1}{2}\right).
\end{align*}

Recently, Ahlgren and Andersen \cite{AA} gave a Rademacher-type exact formula for $\spt(n)$ 
as the conditionally convergent infinite sum 
\begin{align}\label{AA}
\spt(n)=\frac{\pi}{6}(24n-1)^\frac{1}{4}\sum_{c=1}^\infty\frac{A_c(n)}{c}\left(I_{1/2}-I_{3/2}\right)\left(\frac{\pi\sqrt{24n-1}}{6c}\right).
\end{align}

In order to give an effective bound on the error term for $p(n)$, Lehmer \cite{L} truncated the absolutely convergent 
sum (\ref{rademacher}) and applied bounds for the Kloosterman-type sum $A_c(n)$. On the other hand, since the formula (\ref{AA}) is only 
conditionally convergent, bounding $\spt(n)$ is a much more delicate matter. In fact, to resolve the difficult problem of proving that 
(\ref{AA}) converges, Ahlgren and Andersen used advanced methods from the spectral theory of automorphic forms. 
To give an effective bound on the error term for $\spt(n)$, we will instead use   
different types of formulas for $p(n)$ and $\spt(n)$ which express these functions as traces of singular moduli.

To state these formulas, consider the weight $-2$ weakly holomorphic 
modular form for $\Gamma_0(6)$ defined by 
\begin{align*}
g(z):=\frac{1}{2}\frac{E_2(z)-2E_2(2z)-3E_2(3z)+6E_2(6z)}{\big(\eta(z)\eta(2z)\eta(3z)\eta(6z)\big)^2}, \quad z=x+iy \in \mathbb{H}.
\end{align*}
By applying the Maass weight-raising operator to $g(z)$, one gets the following weight zero weak Maass form for $\Gamma_0(6)$,
\begin{align*}
P(z):= -\left(\frac{1}{2\pi i} \frac{d}{dz} + \frac{1}{2\pi y}\right)g(z).
%P(z):=\frac{1}{4\pi}R_{-2}(g(z)), \quad z=x+iy \in \mathbb{H}
\end{align*}

Bruinier and Ono \cite{BrO} proved the following formula for $p(n)$.

\begin{thm}[Bruinier--Ono]
For all $n \geq 1$, we have 
\begin{equation}\label{boformula}
p(n)=\frac{1}{24n-1}\sum_{[Q]}P(\tau_Q),
\end{equation}
where the sum is over the $\Gamma_0(6)$ equivalence classes of discriminant $-24n+1$ positive definite, 
integral binary quadratic forms $Q=[a,b,c]$ such that $6 | a$ and $b \equiv 1 \pmod {12}$, 
and $\tau_Q$ is the Heegner point given by the root $Q\left(\tau_Q,1\right)=0$ in the 
complex upper half-plane $\mathbb{H}$.
\end{thm}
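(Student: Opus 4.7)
The plan is to follow the Bruinier--Ono strategy, which realizes $p(n)$ as a trace of CM values of the weak Maass form $P(z)$ via the theta correspondence between weight zero harmonic Maass forms on $\Gamma_0(6)$ and weight $1/2$ harmonic Maass forms on $\Gamma_0(24)$ in Kohnen's plus space.

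\medskip

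First I would check that $g(z)$ is a weight $-2$ weakly holomorphic modular form on $\Gamma_0(6)$ with principal part $q^{-1}$ at $\infty$ and controlled behavior at the other cusps. The numerator is a linear combination of the forms $E_2(z) - N E_2(Nz)$ for $N \in \{2,3,6\}$, each of which is a holomorphic weight $2$ modular form on $\Gamma_0(6)$ (the quasimodular $1/y$ anomaly of $E_2$ cancels). The denominator $(\eta(z)\eta(2z)\eta(3z)\eta(6z))^2$ is a nonvanishing weight $4$ cusp form on $\Gamma_0(6)$, so the ratio is weight $-2$, holomorphic on $\mathbb{H}$, and has poles only at the cusps. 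A direct $q$-expansion pins down the principal part at $\infty$, and a Fricke/Atkin--Lehner calculation controls the other cusps. Since the Maass raising operator intertwines the $\Gamma_0(6)$ action and raises the weight by two, the form $P(z) = -(\frac{1}{2\pi i} \partial_z + \frac{1}{2\pi y}) g(z)$ is a weight zero weak Maass form for $\Gamma_0(6)$; because $g$ is weakly holomorphic, $P$ is annihilated by the weight zero hyperbolic Laplacian and is therefore a harmonic Maass form with holomorphic principal part inherited from $g$.

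\medskip

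The decisive step is to prove
\begin{equation*}
\sum_{[Q]} P(\tau_Q) = (24n - 1)\, p(n),
\end{equation*}
where the sum runs over $\Gamma_0(6)$-equivalence classes of positive definite binary quadratic forms $[a,b,c]$ of discriminant $-(24n-1)$ with $6 \mid a$ and $b \equiv 1 \pmod{12}$. For this I would invoke the Bruinier--Funke regularized theta lift, which sends a weight zero harmonic Maass form on $\Gamma_0(6)$ to a weakly holomorphic form of weight $1/2$ on $\Gamma_0(24)$ in Kohnen's plus space; the $D$-th Fourier coefficient of the image (for $D = 24n-1$) equals, up to a known normalization, the trace $\sum_{[Q]} P(\tau_Q)$ over the Heegner points with the prescribed orientation, while the principal part of the image is dictated by that of $g$. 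Because the space of weakly holomorphic forms in Kohnen's plus space of weight $1/2$ on $\Gamma_0(24)$ with a given principal part is a coset of the corresponding cusp space, and the cusp space is trivial, the image of $P$ is completely determined by the principal part of $g$. One then identifies it with Zagier's generating series $\sum_n (24n-1)\, p(n)\, q^{24n-1}$, whose principal part at $\infty$ agrees after an explicit finite check, and reads off the desired identity coefficientwise.

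\medskip

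The main obstacle is the trace step: it requires the regularized theta lift for harmonic Maass forms, an explicit Siegel theta kernel at level $6$, and careful bookkeeping of the orientation condition $6 \mid a$, $b \equiv 1 \pmod{12}$ at the level of the Heegner divisor. The modularity and harmonicity of $P$, and the verification of its principal parts at the cusps of $X_0(6)$, are routine once $g$ is understood; what requires genuine input is the identification of the lifted weight $1/2$ form with the partition generating series, together with the vanishing of the ambient weight $1/2$ cusp space that makes this identification unique.
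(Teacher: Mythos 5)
The paper does not prove this statement at all: it is quoted from Bruinier--Ono \cite{BrO} and used as an input, so your sketch can only be measured against the original argument, and there it has a genuine error at its center. You assert that, because $g$ is weakly holomorphic, $P=-\left(\tfrac{1}{2\pi i}\partial_z+\tfrac{1}{2\pi y}\right)g=\tfrac{1}{4\pi}R_{-2}\,g$ is annihilated by the weight zero hyperbolic Laplacian and is therefore a \emph{harmonic} Maass form. That is false: the commutation relation $\Delta_0\,R_{-2}=R_{-2}(\Delta_{-2}-2)$ gives $\Delta_0 P=-2P$, so $P$ is a weak Maass form with Laplace eigenvalue $-2$ (which is exactly why both \cite{BrO} and this paper call it a ``weak Maass form'' and not a harmonic one). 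This is not cosmetic: the Bruinier--Funke regularized theta lift you intend to invoke accepts weight zero harmonic (or weakly holomorphic) forms as input, so it cannot be applied to $P$. Bruinier and Ono circumvent precisely this obstruction by constructing a \emph{new} regularized theta lift whose input is the weight $-2$ form $g$ itself, with a modified theta kernel, landing in vector-valued harmonic Maass forms of weight $-1/2$; the traces of $P$ over the Heegner points then appear in the Fourier coefficients of that lift. Without this (or some substitute for it), your ``decisive step'' has no machinery behind it.

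Your identification step is also misstated. The series $\sum_{n\ge 1}(24n-1)\,p(n)\,q^{24n-1}$ is not a weight $1/2$ weakly holomorphic form in the Kohnen plus space: multiplying Fourier coefficients by their index is the operator $q\frac{d}{dq}$, which destroys modularity in half-integral weight, so there is no such form to compare the lift with, and the triviality of the weight $1/2$ cusp space on $\Gamma_0(24)$ is beside the point. In the actual argument the lift of $g$ is compared with the vector-valued avatar of $\eta(24z)^{-1}=\sum_{n\ge 0}p(n)\,q^{24n-1}$, which genuinely is weakly holomorphic of weight $-1/2$; the factor $24n-1$ comes from the normalization in the coefficient/trace formula for the lift, and uniqueness is the standard principal-part argument in negative weight (via the Bruinier--Funke pairing and the vanishing of holomorphic forms of negative weight), not a Serre--Stark computation. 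Your preliminary reductions (modularity of $g$, control of its principal parts, the genus-character bookkeeping behind $6\mid a$, $b\equiv 1\pmod{12}$) are reasonable, but the two load-bearing steps --- the correct lift and the correct comparison form --- are the ones that are wrong or missing.
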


Similarly, consider the weight zero weakly holomorphic modular form for $\Gamma_0(6)$ defined by 
\begin{align}\label{fdefinition}
f(z):=\frac{1}{24}\frac{E_4(z)-4E_4(2z)-9E_4(3z)+36E_4(6z)}{\big(\eta(z)\eta(2z)\eta(3z)\eta(6z)\big)^2}.
\end{align}

Ahlgren and Andersen \cite{AA} proved the following analogue of \eqref{boformula} for $\spt(n)$. 

\begin{thm}[Ahlgren--Andersen]
For all $n \geq 1$, we have
\begin{equation}\label{AAspt}
\spt(n)=\frac{1}{12}\sum_{[Q]}(f(\tau_Q)-P(\tau_Q)\big).
\end{equation}
\end{thm}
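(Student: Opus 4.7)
The plan is to model the proof on Bruinier and Ono's derivation of the formula \eqref{boformula} for $p(n)$, extended to accommodate the extra weakly holomorphic term $f(z)$. Concretely, I would set $h(z) := f(z) - P(z)$, observe that it is a weight zero harmonic weak Maass form for $\Gamma_0(6)$ (harmonic because $P(z)$ has a non-holomorphic part produced by the Maass raising operator applied to the weight $-2$ form $g(z)$, while $f(z)$ is weakly holomorphic), and aim to show that
\begin{equation*}
\sum_{[Q]} h(\tau_Q) = 12\,\spt(n),
\end{equation*}
where the sum is over the same $\Gamma_0(6)$ classes of discriminant $-24n+1$ forms $Q=[a,b,c]$ with $6 \mid a$ and $b\equiv 1\pmod{12}$ appearing in \eqref{boformula}.

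The first main task is to pin down the modular and growth properties of $h(z)$. The weakly holomorphic form $f(z)$ in \eqref{fdefinition} is built from $E_4(mz)$ divided by $\big(\eta(z)\eta(2z)\eta(3z)\eta(6z)\big)^2$, so its $q$-expansion at $i\infty$ begins like $q^{-1}+O(1)$, mirroring the holomorphic part of $P(z)$ up to normalization. Using the transformation laws of $E_2(mz)$ and $E_4(mz)$ under the Atkin--Lehner involutions $W_2,W_3,W_6$ on $\Gamma_0(6)$, I would compute the principal part of $h(z)$ at each of the four cusps of $\Gamma_0(6)$, thereby characterizing $h$ uniquely in its weight zero harmonic Maass space up to a cusp form (and noting that the weight zero cuspidal space is trivial).

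The second main step is to apply the Bruinier--Funke theta lift, which converts the CM trace $\sum_{[Q]} h(\tau_Q)$ into the $(24n-1)$-th Fourier coefficient of a weight $3/2$ harmonic Maass form on $\Gamma_0(24)$ in Kohnen's plus space. One then identifies this weight $3/2$ lift with the form whose holomorphic part is Bringmann's mock modular generating function for $\spt(n)$, using the fact that the principal parts computed in the first step match, cusp-by-cusp, those of the known $\spt$-generating form. Matching the $(24n-1)$-st Fourier coefficient then delivers the stated identity.

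The hard part, I expect, is the first step: showing that the specific linear combination $f(z)$ is exactly the correction needed so that $f-P$ has the right principal parts at \emph{every} cusp of $\Gamma_0(6)$, not only $i\infty$. This is a finite but delicate computation, essentially equivalent to solving a linear system in the four-dimensional space parametrizing principal parts modulo $\Gamma_0(6)$-modular constants; I would carry it out by expanding the four Eisenstein summands $E_4(z), E_4(2z), E_4(3z), E_4(6z)$ (and their $W_d$-images) at each cusp against the corresponding expansions of $P(z)$ coming from $g(z)$ in the definition. Once this bookkeeping is cleared, the identity \eqref{AAspt} follows from the theta lift almost mechanically.
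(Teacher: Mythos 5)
You should first be aware that the paper does not prove this statement at all: it is quoted from Ahlgren--Andersen \cite{AA} and used as an input (via (\ref{reorganized})) to Theorems \ref{Sprop} and \ref{riad}. So there is no internal proof to compare against; what I can assess is how your sketch stands on its own and relative to the argument in the literature. In broad outline your route is the standard one: view $f-P$ as a weight zero harmonic object on $\Gamma_0(6)$, control its behavior at all four cusps, and use a Bruinier--Funke/Bruinier--Ono style theta lift to convert the Heegner trace into the $(24n-1)$st coefficient of Bringmann's weight $3/2$ spt-form, identified by its principal parts.

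As written, however, the sketch has two genuine gaps. First, in weight zero the ambiguity left after matching principal parts is an additive \emph{constant}, not a cusp form: a harmonic weak Maass form bounded at all cusps is constant, and this constant cannot be discarded, since its trace contributes a multiple of the class number $H(-24n+1)$ (equivalently, under the lift it shifts the answer by a multiple of Zagier's weight $3/2$ Eisenstein series), which is the same order of magnitude as the quantities being identified. Pinning the constant down requires an explicit computation of constant terms, exactly of the kind the present paper carries out for $f$ itself in Lemma \ref{fourierbound} (showing $a(0)=b(0)=12$ so that $f(z)=f(z,1)$). Second, after the lift you propose to identify two weight $3/2$ harmonic Maass forms solely by matching principal parts cusp-by-cusp; that only determines their difference up to a holomorphic cusp form of weight $3/2$ on the relevant group with the relevant character, and in weight $3/2$ such cusp forms (unary theta series) do exist in general. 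One must also match the nonholomorphic parts (shadows) and then prove the residual cusp form vanishes, either by showing the relevant plus-space of cusp forms is trivial or by an explicit finite coefficient check. Until the constant is fixed and the cuspidal ambiguity is eliminated, the final step ``matching the $(24n-1)$-st Fourier coefficient delivers the identity'' does not follow; with those two points supplied, your plan is essentially the argument of \cite{AA}.
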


Identities which express 
Fourier coefficients of weak Maass forms as traces of singular moduli 
have been used in many contexts to give strong asymptotic formulas. 
For example, Bringmann and Ono \cite{BriO} expressed $p(n)$ as a twisted trace of singular 
moduli by arithmetically reformulating Rademacher's exact formula (\ref{rademacher}) for $p(n)$. 
Folsom and the second author \cite{FM} then combined the Bringmann-Ono formula
with spectral methods and subconvexity bounds for quadratic twists of modular $L$--functions
to give an asymptotic formula for $p(n)$ with a power-saving error term. In particular, by calculating the main term in this 
asymptotic formula in terms of the truncated main term in Rademacher's exact formula for $p(n)$, these authors improved the exponent 
in Lehmer's bound \cite{L}. This exponent was further improved by Ahlgren and Andersen \cite{AA2}.

In the works \cite{M, M2, BBMS}, spectral methods and subconvexity bounds were again used 
to give asymptotic formulas with power-saving error terms 
for twisted traces of singular moduli. These results were applied to study a variety of arithmetic problems, including the distribution of 
$p(n)$ and $\spt(n)$, and the distribution of partition ranks. 
Note that although the constants in the error terms of these results are effective, it would be very difficult to actually give explicit numerical 
values for these constants because of the techniques involved in the proofs. However, there is an alternative approach 
which we now describe.

Using (\ref{boformula}), the formula (\ref{AAspt}) can be written as 
\begin{align}\label{reorganized}
\spt(n)=\frac{1}{12}S(n) - \frac{24n-1}{12}p(n),
\end{align}
where $S(n)$ is the trace of singular moduli for $f(z)$ given by
\begin{align*}
S(n):=\sum_{[Q]}f(\tau_Q).
\end{align*}

By applying Lehmer's effective error bounds for $p(n)$ in (\ref{reorganized}), we will reduce the proof of Theorem \ref{riad} to 
the following asymptotic formula for the trace $S(n)$ with an effective bound on the error term. 

\begin{theorem}\label{Sprop}
Let $\l(n):=\pi \sqrt{24n-1}/6$. Then for all $n \geq 1$, we have
\begin{align*}
S(n)=2\sqrt{3} e^{\l(n)} + E(n),
\end{align*}
where 
\begin{align*}
|E(n)| < (4.30 \times 10^{23}) 2^{q(n)} (24n-1)^2 e^{\l(n)/2}
%2H(-24n+1)\exp(\pi\sqrt{24n-1}/12) + (3.44 \times 10^{44})H(-24n+1).
%|E(n)| < (124000000940) (12n-1)\exp\left(\pi\sqrt{24n-1}/12\right). 
\end{align*}
with 
\begin{align*}
q(n):=\frac{\log(24n-1)}{|\log(\log(24n-1))-1.1714|}.
\end{align*}
\end{theorem}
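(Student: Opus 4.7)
The plan is to derive a Rademacher-type exact formula for $S(n)$ of the shape
\begin{equation*}
S(n)=2\pi(24n-1)^{1/4}\sum_{c=1}^{\infty}\frac{A_c(n)}{c}\,I_{1/2}\!\left(\frac{\lambda(n)}{c}\right),
\end{equation*}
extract the main term from the $c=1$ summand, and bound the tail with explicit constants.

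To derive the formula, start from the reorganization $S(n)=12\,\spt(n)+(24n-1)p(n)$ in (\ref{reorganized}), which follows directly from the trace identities (\ref{boformula}) and (\ref{AAspt}). Substituting Rademacher's formula (\ref{rademacher}) for $p(n)$ and the Ahlgren--Andersen formula (\ref{AA}) for $\spt(n)$, one checks that both contributions carry the common prefactor $2\pi(24n-1)^{1/4}$, and the termwise identity $(I_{1/2}-I_{3/2})+I_{3/2}=I_{1/2}$ collapses them into the single series above. Crucially, the combined series is much better behaved than the original series for $\spt(n)$, because the subleading cancellation between $I_{1/2}(\lambda(n)/c)$ and $I_{3/2}(\lambda(n)/c)$ that caused the conditional convergence has been removed.

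Next, isolate the main term. Using $A_1(n)=1$ and the identity $I_{1/2}(x)=\sqrt{2/(\pi x)}\,\sinh x$, the $c=1$ summand equals $2\pi(24n-1)^{1/4}\cdot\sqrt{2/(\pi\lambda(n))}\cdot(e^{\lambda(n)}-e^{-\lambda(n)})/2$; the simplification $\sqrt{2/(\pi\lambda(n))}=2\sqrt{3}/(\pi(24n-1)^{1/4})$ reduces this to $2\sqrt{3}\,e^{\lambda(n)}$, with the exponentially small $e^{-\lambda(n)}$ piece absorbed into $E(n)$.

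Finally, bound the tail $\sum_{c\geq 2}$. Apply an explicit Weil-type bound $|A_c(n)|\leq 2^{\omega(c)}\sqrt{c\,(24n-1,c)}$ together with the uniform estimate $I_{1/2}(x)\leq\sqrt{2/(\pi x)}\,e^{x}$. Splitting the sum at $c=\lambda(n)$: in the range $2\leq c\leq\lambda(n)$ the factor $e^{\lambda(n)/c}$ is largest at $c=2$, contributing $e^{\lambda(n)/2}$ times a polynomial in $n$; in the range $c>\lambda(n)$ the Bessel factor is polynomial and the resulting sum converges to a polynomial contribution. The polynomial factor $(24n-1)^2$ absorbs the prefactor and the summation-over-$c$ growth, the factor $2^{q(n)}$ arises from an effective Ramanujan-type maximal-order bound on $d(c)=2^{\omega(c)}$ across the truncation range, and the constant $4.30\times 10^{23}$ consolidates the explicit Bessel and Kloosterman constants. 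The main obstacle will be keeping every inequality fully explicit, in particular matching the divisor-function estimate to the exponent $q(n)$ stated in the theorem and choosing the truncation point so that the constants come out as claimed; a subsidiary delicate point is rigorously justifying the truncation since the combined series may be only conditionally convergent, which can be handled via Abel summation exploiting the oscillation in $A_c(n)$.
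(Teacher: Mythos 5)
Your derivation of the formal identity $S(n)=2\pi(24n-1)^{1/4}\sum_{c\geq 1}\frac{A_c(n)}{c}I_{1/2}\bigl(\lambda(n)/c\bigr)$ from (\ref{rademacher}) and (\ref{AA}), and your extraction of the main term $2\sqrt{3}\,e^{\lambda(n)}$ from the $c=1$ summand, are fine. The gap is in the tail. Your claim that the interplay between $I_{1/2}$ and $I_{3/2}$ ``caused the conditional convergence'' and that adding the $p(n)$ series removes it is backwards: for large $c$ one has $I_{1/2}(\lambda(n)/c)\asymp c^{-1/2}$ while $I_{3/2}(\lambda(n)/c)\asymp c^{-3/2}$, so the absolutely convergent piece is the Rademacher ($I_{3/2}$) series, and the slowly decaying, only conditionally convergent piece is exactly the $I_{1/2}$ part that survives in your combined series. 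Under a Weil-type bound $|A_c(n)|\ll c^{1/2+\epsilon}$ the terms of your series in the range $c>\lambda(n)$ are of size $c^{-1+\epsilon}$, so the termwise estimate you propose for that range diverges; no ``polynomial contribution'' can be obtained by taking absolute values. Consequently the entire burden falls on the point you defer to a closing remark: quantifying the oscillation of $A_c(n)$. That is precisely what Ahlgren and Andersen needed the spectral theory of automorphic forms for, even just to prove that (\ref{AA}) converges, and making that cancellation effective with explicit numerical constants is the obstruction this paper is explicitly designed to avoid. ``Abel summation exploiting the oscillation'' without an explicit bound on partial sums of the $A_c(n)$ yields no bound at all, let alone the stated constant $4.30\times 10^{23}$ and the factor $2^{q(n)}(24n-1)^2$.

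For comparison, the paper's proof never touches the conditionally convergent series: it uses the singular-moduli representation $S(n)=\sum_{[Q]}f(\tau_Q)$, decomposes into primitive classes via the Atkin--Lehner relation (\ref{relation}), passes to reduced forms of level one through the bijection (\ref{bijection}), expands $f$ at each cusp of $X_0(6)$ with effectively bounded Fourier coefficients (Lemma \ref{fourierbound}), identifies the main term $2\sqrt{3}\,e^{\lambda(n)}$ as the contribution of the four forms $[1,1,6n]$, $[2,1,3n]$, $[3,1,2n]$, $[6,1,n]$ with $a_Qh_Q=6$, and absorbs everything else into an error controlled by an effective bound for the class number $H(D_n)$, which is exactly where $2^{q(n)}(24n-1)^2$ comes from. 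To salvage your route you would need, as a substitute input, fully explicit estimates for weighted sums of the Kloosterman-type sums $A_c(n)$, which is a substantially harder ingredient than anything the paper uses.
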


\vspace{0.10in}

Our proof of Theorem \ref{Sprop} is inspired by work of Dewar and Murty \cite{DM}, 
who used the formula (\ref{boformula}) to derive the Hardy-Ramanujan asymptotic formula for $p(n)$ 
without using the circle method. In order to give effective bounds, additional care must be taken. 
For instance, we must give effective bounds for the Fourier coefficients of $f(z)$.

\vspace{0.10in}

\textbf{Organization}. The paper is organized as follows. In Section \ref{heegner}, we review some facts regarding quadratic forms and Heegner points. 
In Section \ref{trace}, we prove Theorem \ref{Sprop}. 
In Section \ref{s2}, we prove Theorem \ref{riad}. 
In Section \ref{s3}, we prove Theorem \ref{better}. Finally, in Section \ref{s4}, we prove the remaining conjectures.

\vspace{0.10in}

\textbf{Acknowledgments}.
We thank Adrian Barquero-Sanchez, Sheng-Chi Liu, Karl Mahlburg, Ken Ono, Wei-Lun Tsai, and Matt Young for very helpful discussions regarding this work, 
and Michael Griffin and Lea Beneish for help computing values of $\spt(n)$. We also thank the referees for many detailed comments and corrections, 
leading to simplifications of some arguments, sharper estimates, and an improved exposition.

\section{Quadratic forms and Heegner points}\label{heegner}

Let $N \geq 1$ be a positive integer and $D < 0$ be a negative discriminant coprime to $N$. Let 
$\mathcal{Q}_{D,N}$ be the set of positive definite, integral binary quadratic forms 
$$Q(X,Y)=[a_Q,b_Q,c_Q](X,Y)=a_Q X^2+b_Q XY+c_Q Y^2$$ of discriminant $b_Q ^2-4a_Q c_Q= D < 0$ with $a_Q \equiv 0 \pmod N$. 
There is a (right) action of $\Gamma_0(N)$ on $\mathcal{Q}_{D,N}$ defined by 
\begin{align*}
Q=\left[a_Q, b_Q, c_Q\right] \mapsto Q \circ \sigma = \left[a_Q^{\sigma}, b_Q^{\sigma}, c_Q^{\sigma}\right],
\end{align*}
where  
for $\sigma = \begin{pmatrix} \alpha & \beta \\ \gamma & \delta \end{pmatrix} \in \Gamma_0(N)$ we have
\begin{align*}
a_Q^{\sigma}&= a_Q \alpha^2 + b_Q \alpha \gamma + c_Q \gamma^2 ,\\
b_Q^{\sigma}&=  2 a_Q \alpha \beta + b_Q(\alpha \delta + \beta \gamma) + 2 c_Q \gamma \delta ,\\
c_Q^{\sigma}&=  a_Q \beta^2 + b_Q \beta \delta +c_Q \delta^2.
\end{align*}  

Given a solution $r \pmod {2N}$ of $r^2 \equiv D \pmod {4N}$, we define the subset of forms 
\begin{align*}
\mathcal{Q}_{D,N,r}:=\left\{Q=\left[a_Q,b_Q,c_Q\right] \in \mathcal{Q}_{D,N}:~b_Q \equiv r \shortmod {2N}\right\}. 
\end{align*}
Then the group $\Gamma_0(N)$ also acts on $\mathcal{Q}_{D,N,r}$. The number of $\Gamma_0(N)$ equivalence classes in 
$\mathcal{Q}_{D,N,r}$ is given by the Hurwitz-Kronecker class number $H(D)$.

The preceding facts remain true if we restrict to the subset $\mathcal{Q}_{D,N}^{\textrm{prim}}$ of primitive forms in $\mathcal{Q}_{D,N}$; i.e., 
those forms with $$(a_Q, b_Q, c_Q)=1.$$ In this case, the number of $\Gamma_0(N)$ equivalence classes in 
$\mathcal{Q}_{D,N,r}^{\textrm{prim}}$ is given by the class number $h(D)$.

To each form $Q \in \mathcal{Q}_{D,N}$, we associate a Heegner point $\tau_Q$ which is the root of $Q\left(X,1\right)$ given by
\begin{align*}
\tau_Q=\frac{-b_Q+\sqrt{D}}{2a_Q} \in \mathbb{H}.
\end{align*}
The Heegner points $\tau_Q$ are compatible with the action of $\Gamma_0(N)$ in the sense that if $\sigma \in \Gamma_0(N)$, then 
\begin{align}\label{compatible}
\sigma (\tau_{Q}) = \tau_{Q \circ \sigma^{-1}}.
\end{align}

\section{Proof of Theorem \ref{Sprop}}\label{trace}

In this section, we prove Theorem \ref{Sprop}, which gives an asymptotic formula with an effective bound on the error term for the 
trace of the weight zero weakly holomorphic modular form $f(z)$ for $\Gamma_0(6)$ defined by (\ref{fdefinition}). This will be used crucially in the 
proof of Theorem \ref{riad}.

Let $D_n:=-24n+1$ for $n \in \Z^{+}$ and define the trace of $f(z)$ by 
\begin{align*}
S(n):=\sum_{[Q] \in \mathcal{Q}_{D_n, 6, 1}/\Gamma_0(6)}f(\tau_Q).
\end{align*}

First, we decompose $S(n)$ as a linear combination of traces involving primitive forms. Let 
$\Delta < 0$ be any discriminant with $\Delta \equiv 1 \pmod {24}$ and define the class polynomials 
\begin{align*}
H_n(X):=\prod_{[Q] \in \mathcal{Q}_{D_n, 6, 1}/\Gamma_0(6)}(X-f(\tau_Q))
\end{align*} 
and 
\begin{align*}
\widehat{H}_{\Delta}(X):=\prod_{[Q] \in \mathcal{Q}_{\Delta, 6, 1}^{\textrm{prim}}/\Gamma_0(6)}(X-f(\tau_Q)).
\end{align*} 
Let $\{W_{\ell}\}_{\ell|6}$ be the group of Atkin-Lehner involutions for $\Gamma_0(6)$. Since 
\begin{align}\label{ALid}
f|_0W_{\ell}=\lambda_{\ell} f
\end{align}
with $\lambda_{\ell}=1$ for $\ell=1,6$ and $\lambda_{\ell}= -1$ for $\ell=2,3$, 
then arguing exactly as in the proof of \cite[Lemma 3.7]{BOS}, we get the identity 
\begin{align}\label{relation}
H_n(X)=\prod_{\substack{u > 0 \\ u^2 | D_n}}\varepsilon(u)^{h(D_{n}/u^2)}\widehat{H}_{D_{n}/u^2}(\varepsilon(u)X),
\end{align}
where $\varepsilon(u)=1$ if $u \equiv \pm 1 \pmod{12}$ and $\varepsilon(u)=-1$ otherwise. Comparing terms on both sides of (\ref{relation}) 
yields the class number relation 
\begin{align*}
H(D_n)=\sum_{\substack{u > 0 \\ u^2 | D_n}}h(D_{n}/u^2)
\end{align*}
and the decomposition 
\begin{align}\label{Sdecomp}
S(n)=\sum_{\substack{u > 0 \\ u^2 | D_n}}\varepsilon(u)S_u(n),
\end{align}
where 
\begin{align*}
S_u(n):=\sum_{[Q] \in \mathcal{Q}_{D_n/u^2, 6, 1}^{\textrm{prim}}/\Gamma_0(6)}f(\tau_Q).
\end{align*}

Next, following \cite{DM} we express $S_u(n)$ as a trace involving primitive forms of level 1. 
The group $\Gamma_0(6)$ has index 12 in $SL_2(\Z)$. We 
choose the following 12 right coset representatives: 
\begin{align*}
\gamma_{\infty}&:=\begin{pmatrix} 1 & 0 \\ 0 & 1
\end{pmatrix},\\
\gamma_{1/3,r}&:=\begin{pmatrix}
1 & 0 \\ 3 & 1 
\end{pmatrix}
\hspace{-0.05in}
\begin{pmatrix}
1 & r \\ 0 & 1
\end{pmatrix}, \quad r=0,1; \\
\gamma_{1/2,s}&:=\begin{pmatrix}
1 & 1 \\ 2 & 3 
\end{pmatrix}
\hspace{-0.05in}
\begin{pmatrix}
1 & s \\ 0 & 1
\end{pmatrix}, \quad s=0,1,2; \\
\gamma_{0,t}&:=\begin{pmatrix}
0 & -1 \\ 1 & 0 
\end{pmatrix}
\hspace{-0.05in}
\begin{pmatrix}
1 & t \\ 0 & 1
\end{pmatrix}, \quad t=0,1,2, 3, 4, 5.
\end{align*}
We denote this set of coset representatives by $\mathbf{C}_6$. Each matrix $\gamma \in \mathbf{C}_6$ maps the cusp $\infty$ to one of 
the four cusps $\{\infty, 1/3, 1/2, 0\}$ of the modular curve $X_0(6)$, which have widths 1, 2, 3, and 6, respectively. In particular, we have 
$\gamma_{\infty}(\infty)=\infty$, $\gamma_{1/3,r}(\infty)=1/3$, $\gamma_{1/2, s}(\infty)=1/2$, and $\gamma_{0,t}(\infty)=0$.

Recall that a form $Q=[a_Q, b_Q, c_Q] \in \mathcal{Q}_{\Delta,1}$ is reduced if 
\begin{align*}
|b_Q| \leq a_Q \leq c_Q,
\end{align*}
and if either $|b_Q|=a_Q$ or $a_Q=c_Q$, then $b_Q \geq 0$. Let 
$\mathcal{Q}_{\Delta}$ denote a set of primitive, reduced forms representing the equivalence classes in 
$\mathcal{Q}_{\Delta, 1}^{\textrm{prim}}/SL_2(\Z)$. 
For each $Q \in \mathcal{Q}_{\Delta}$, there is a unique choice of coset representative $\gamma_Q \in \mathbf{C}_6$ 
such that 
\begin{align*}
[Q \circ \gamma_Q^{-1}] \in \mathcal{Q}_{\Delta, 6, 1}^{\textrm{prim}}/\Gamma_0(6).
\end{align*} 
This induces a bijection 
\begin{align}\label{bijection}
\mathcal{Q}_{\Delta} & \longrightarrow \mathcal{Q}_{\Delta, 6, 1}^{\textrm{prim}}/\Gamma_0(6)\\
Q &\longmapsto [Q \circ \gamma_Q^{-1}];\notag
\end{align}
see the Proposition on page 505 in \cite{GKZ}, or more concretely, \cite[Lemma 3]{DM}, 
where an explicit list of the matrices $\gamma_Q \in \mathbf{C}_6$ is given.

Using the bijection (\ref{bijection}) and the compatibility relation (\ref{compatible}) for Heegner points, the trace $S_u(n)$ 
can be expressed as 
\begin{align}\label{newtrace}
S_u(n)=\sum_{[Q] \in \mathcal{Q}_{D_n/u^2, 6, 1}^{\textrm{prim}}/\Gamma_0(6)}f(\tau_Q)=\sum_{Q \in \mathcal{Q}_{D_{n}/u^2}}f\left(\gamma_Q(\tau_Q)\right).
\end{align}
Therefore, to study the asymptotic distribution of $S_u(n)$, 
we need the Fourier expansion of $f(z)$ 
with respect to the matrices $\gamma_{\infty}, \gamma_{1/3,r}, \gamma_{1/2,s}$, and $\gamma_{0,t}$.

In \cite[Section 4]{AA}, Ahlgren and Andersen compute the Fourier expansion of $f(z)$ at the cusp $\infty$. The basic idea is as follows. 
The weakly holomorphic modular form $f(z)$ has a Fourier expansion of the form 
\begin{align*}
f(z)=e(-z) + b(0) + \sum_{m=1}^{\infty}b(m)e(mz), \quad e(z):=e^{2\pi i z}
\end{align*}
for some integers $b(m)$ for $m \geq 0$. One can construct a weight zero weak Maass form $f(z,s)$ for $\Gamma_0(6)$ with eigenvalue $s(1-s)$ 
whose analytic continuation at $s=1$ is a harmonic function on $\mathbb{H}$ with the Fourier expansion 
\begin{align*}
f(z,1)=e(-z) + a(0) + \sum_{m=1}^{\infty}\frac{a(m)}{\sqrt{m}}e(mz) - e(-\overline{z}) + \sum_{m=1}^{\infty}\frac{a(-m)}{\sqrt{m}} e(-m\overline{z}), 
\end{align*}
where 
\begin{align*}
a(0)& =4\pi^2\sum_{\ell|6}\frac{\mu(\ell)}{\ell}\sum_{\substack{0 < c \equiv 0 \mod (6/\ell) \\ (c,\ell)=1}}
\frac{S\left(-\overline{\ell}, 0;c\right)}{c^{2}},\\
\vspace{0.05in}
a(m)&=2\pi \sum_{\ell|6}\frac{\mu(\ell)}{\sqrt{\ell}}\sum_{\substack{0 < c \equiv 0 \mod (6/\ell) \\ (c,\ell)=1}}
\frac{S\left(-\overline{\ell}, m ;c\right)}{c} I_1\left(\frac{4\pi\sqrt{m}}{\sqrt{\ell}c}\right), \quad m \geq 1 \\
\vspace{0.05in}
a(-m)&=2\pi \sum_{\ell|6}\frac{\mu(\ell)}{\sqrt{\ell}}\sum_{\substack{0 < c \equiv 0 \mod (6/\ell) \\ (c,\ell)=1}}
\frac{S\left(-\overline{\ell}, -m ;c\right)}{c} J_1\left(\frac{4\pi\sqrt{m}}{\sqrt{\ell}c}\right), \quad m \geq 1.
\end{align*}
Here $\mu(\ell)$ is the M\"obius function, $S(a,b;c)$ is the Kloosterman sum 
\begin{align*}
S(a,b;c):=\sum_{\substack{d \shortmod c \\ (c,d)=1}}e\left(\frac{a\overline{d} + bd}{c}\right),
\end{align*}
and $I_1, J_1$ are the Bessel functions of order 1 (note that $\overline{d}$ is the multiplicative inverse of $d \pmod c$). 
From these Fourier expansions, one can see that the functions $f(z)$ and $f(z,1)$ 
have the same principal parts in the cusps $\{\infty, 1/3, 1/2, 0\}$, hence the 
function $f(z)-f(z,1)$ is bounded on the compact Riemann surface $X_0(6)$. Since a bounded harmonic function on a compact Riemann surface is constant, 
the function $f(z)-f(z,1)$ is constant.  

Now, using the Fourier expansions of $E_4(z)$ and $\eta(z)$, we use \texttt{SageMath} to compute 
\begin{align*}
f(z)=q^{-1} + 12 + 77 q + 376 q^2 + 1299q^3 + 4600q^4 + 12025q^5 + \cdots , \quad q:=e(z). 
\end{align*} 
In particular, $b(0)=12$. On the other hand, in Lemma \ref{fourierbound} we show by a direct calculation that $a(0)=12$. Since
$f(z)-f(z,1)$ is constant, we have
\begin{align*}
f(z)-f(z,1)=b(0)-a(0)=12-12=0.
\end{align*} 
Finally, since $f(z)=f(z,1)$, then by uniqueness of Fourier expansions we have $b(m)=m^{-1/2}a(m)$ for $m \geq 1$, 
$a(-1)=1$, and $a(-m)=0$ for $m \geq 2$.  

We next use the Fourier expansion 
\begin{align*}
f|_{0}\gamma_{\infty}(z) = e(-z) + 12
+ \sum_{m=1}^{\infty}\frac{a(m)}{\sqrt{m}}e(mz)
\end{align*}
to compute the Fourier expansion of $f(z)$ 
with respect to the matrices $\gamma_{\infty}, \gamma_{1/3,r}, \gamma_{1/2,s}$, and $\gamma_{0,t}$.

The Atkin-Lehner involutions for $\Gamma_0(6)$ are given by 
\begin{align*}
W_1=
  \begin{pmatrix}
    1 & 0 \\
    0 & 1
  \end{pmatrix}, \quad 
W_2=
  \frac{1}{\sqrt{2}}\begin{pmatrix}
    2 & -1 \\
    6 & -2
  \end{pmatrix}, \quad
W_3=
  \frac{1}{\sqrt{3}}\begin{pmatrix}
    3 & 1 \\
    6 & 3
 \end{pmatrix}, \quad 
 W_6=
  \frac{1}{\sqrt{6}}\begin{pmatrix}
    0 & -1 \\
   6 & 0
  \end{pmatrix}.
\end{align*}

For each $\ell|6$ and $v=6/\ell$, let $V_{\ell} = \sqrt{\ell} W_{\ell}$ and 
\begin{align*}
A_{\ell} = 
\begin{pmatrix}
\frac{1}{\textrm{width of the cusp $1/v$}} & 0 \\
    0 & 1
\end{pmatrix}.
\end{align*}
We have 
\small
\begin{table}[H]{\tabulinesep=1.35mm
\begin{tabu}{|c|c|c|c|c|}
\hline
cusp $1/v$ & $\infty \simeq 1/6$ & $1/3$ & $1/2$ & $0 \simeq 1$ \\ \hline
$\ell$ & $1$ & $2$ & $3$ & $6$ \\ \hline
$V_{\ell}$ & $\begin{pmatrix}
    1 & 0 \\
    0 & 1
  \end{pmatrix}$ & $\begin{pmatrix}
    2 & -1 \\
    6 & -2
  \end{pmatrix}$ & $\begin{pmatrix}
    3 & 1 \\
    6 & 3
 \end{pmatrix}$ & $\begin{pmatrix}
    0 & -1 \\
   6 & 0
  \end{pmatrix}$ \\ \hline
 $A_{\ell}$ & $\begin{pmatrix}
    1 & 0 \\
    0 & 1
  \end{pmatrix}$ & $\begin{pmatrix}
    1/2 & 0 \\
    0 & 1
  \end{pmatrix}$ & $\begin{pmatrix}
    1/3 & 0 \\
    0 & 1
 \end{pmatrix}$ & $\begin{pmatrix}
    1/6 & 0 \\
   0 & 1
  \end{pmatrix}$ \\ \hline
 $V_{\ell}A_{\ell}$ & $\begin{pmatrix}
    1 & 0 \\
    0 & 1
  \end{pmatrix}$ & $\begin{pmatrix}
    1 & -1 \\
    3 & -2
  \end{pmatrix}$ & $\begin{pmatrix}
    1 & 1 \\
    2 & 3
 \end{pmatrix}$ & $\begin{pmatrix}
    0 & -1 \\
   1 & 0
  \end{pmatrix}$ \\ \hline
\end{tabu}
}
\end{table}
\normalsize
Note that $V_{\ell}A_{\ell} \in SL_2(\mathbb{Z})$ and 
\begin{align*}
V_{\ell}A_{\ell}(\infty)=\frac{1}{v}.
\end{align*}
Let $\gamma \in SL_2(\Z)$ be any matrix such that $\gamma(\infty)=1/v$. Then 
\begin{align*} 
\left(V_{\ell}A_{\ell}\right)^{-1}(\gamma(\infty)) = \infty, 
\end{align*}
so that 
\begin{align*}
\left(V_{\ell}A_{\ell}\right)^{-1}\gamma \in \Gamma_{\infty}:=\left\{\begin{pmatrix} 1  & n \\ 0 & 1 \end{pmatrix}:~n \in \Z \right\}
\end{align*} 
where $\Gamma_\infty$ is the stabilizer of the cusp $\infty$. In particular,  
there is an integer $n \in \mathbb{Z}$ such that 
\begin{align*}
\gamma=V_{\ell}A_{\ell}
\begin{pmatrix}
    1 & n \\
    0 & 1
\end{pmatrix}.
\end{align*}
By solving for $n$ for each cusp, we have 
\begin{align*}
\gamma_{\infty} = V_1A_1, \quad
\gamma_{1/3,r} = V_2A_2 
\begin{pmatrix}
1 & r+1 \\
0 & 1
\end{pmatrix}, \quad 
\gamma_{1/2,s} & = V_3A_3
\begin{pmatrix}
    1 & s \\
    0 & 1
  \end{pmatrix}, \quad
  \gamma_{0,t}= V_6A_6
\begin{pmatrix}
    1 & t \\
    0 & 1
\end{pmatrix}.
\end{align*}

Now, by (\ref{ALid}) we have
$f(V_{\ell}z)=f(z)$ for $\ell=1,6$ and $f(V_{\ell}z)=-f(z)$ for $\ell=2,3$. Hence 
\begin{align*}
   f|_{0}\gamma_{\infty} (z) & = f(z),\\
   f|_{0}\gamma_{1/3,r} (z) & = 
f\left(V_2A_2\begin{pmatrix}
    1 & r+1 \\
    0 & 1
  \end{pmatrix} z\right) = f\left(V_2\left(\frac{z+r+1}{2}\right)\right) = -f\left(\frac{z+r+1}{2}\right),\\
f|_{0}\gamma_{1/2,s} (z) &= f\left(V_3A_3\begin{pmatrix}
    1 & s \\
    0 & 1
  \end{pmatrix} z\right) = f\left(V_3\left(\frac{z+s}{3}\right)\right) = - f\left(\frac{z+s}{3}\right),\\
 f|_{0}\gamma_{0,t} (z) &= f\left(V_6A_6\begin{pmatrix}
     1 & t \\
     0 & 1
   \end{pmatrix} z\right) = f\left(V_6\left(\frac{z+t}{6}\right)\right) = f\left(\frac{z+t}{6}\right).
\end{align*}

The Fourier expansion of $f(z)$ with respect to the matrices $\gamma_{1/3,r}, \gamma_{1/2,s}, \gamma_{0,t}$ can now 
be determined from the Fourier expansion at $\infty$ using these identities. In particular, if $\zeta_6:=e(1/6)$ is a primitive sixth root of unity, 
we have 
\begin{align*}
f|_{0}\gamma_{1/3,r}(z)  & = \zeta_6^{3r}e(-z/2) - 12 
+ \sum_{m=1}^{\infty}\zeta_6^{3+3m(r+1)}\frac{a(m)}{\sqrt{m}}e(mz/2),\\
f|_{0}\gamma_{1/2,s}(z) & = \zeta_6^{3-2s}e(-z/3) - 12 
+ \sum_{m=1}^{\infty}\zeta_6^{3+2ms}\frac{a(m)}{\sqrt{m}}e(mz/3),\\
f|_{0}\gamma_{0,t}(z) &= \zeta_6^{-t}e(-z/6) + 12 
+ \sum_{m=1}^{\infty}\zeta_6^{mt}\frac{a(m)}{\sqrt{m}}e(mz/6).
\end{align*}

Given a form $Q \in Q_\Delta$ and corresponding coset representative 
$\gamma_{Q} \in \mathbf{C}_6$, let $h_Q \in \{1,2,3,6\}$ be the width of the cusp 
$\gamma_{Q}(\infty)$, and let $\zeta_Q$ and $\phi_{m,Q}$ be the sixth roots of unity defined as follows:
\begin{table}[H]
{\tabulinesep=1.35mm
\begin{tabu}{|c|c|c|c|c|}
\hline
 cusp $\gamma_{Q}(\infty)$ & $\infty \simeq 1/6$ & $1/3$ & $1/2$ & $0 \simeq 1$ \\ \hline
 $\zeta_Q$ & $1$ & ${\zeta_6}^{3r}$ & ${\zeta_6}^{3-2s}$ & ${\zeta_6}^{-t}$ \\ \hline
 $\phi_{m,Q}$ & $1$ & ${\zeta_6}^{3+3m(r+1)}$ & ${\zeta_6}^{3+2ms}$ & ${\zeta_6}^{mt}$ \\ \hline
\end{tabu} 
}
%\caption{sixth root of unity corresponding to the cusps}
\label{table:rootU}
\end{table}
Then we can write 
\begin{align}\label{formexpand}
f|_{0}\gamma_{Q}(z)  & = \zeta_Qe(-z/h_Q) + 12\mu(h_Q)
+ \sum_{m=1}^{\infty}\phi_{m,Q}\frac{a(m)}{\sqrt{m}}e(mz/h_Q).
\end{align}

In the following lemma we evaluate $a(0)$ and give effective bounds for the Fourier coefficients $a(m)$ for $m \geq 1$. 

\begin{lemma}\label{fourierbound} We have $a(0) = 12$ and
\begin{align*}
|a(m)| \leq C \sqrt{m}\exp(4\pi\sqrt{m}), \quad m \geq 1
\end{align*}
where 
\begin{align*}
C:=  8\sqrt{6}\pi^{3/2} + 16 \pi^2 \zeta^2(3/2).
\end{align*}     
\end{lemma}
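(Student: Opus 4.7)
The plan is to establish the two assertions separately: an explicit evaluation of $a(0)$ via Ramanujan sums, and an effective bound on $|a(m)|$ via Weil's bound together with Bessel function estimates.

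\textbf{Evaluating $a(0)$.} First I would recognize that the Kloosterman-type sum $S(-\overline{\ell},0;c)=\sum_{(d,c)=1}e(-\overline{\ell}\,\overline{d}/c)$ is the Ramanujan sum $c_c(-\overline{\ell})$ (after reindexing by $d'=\overline{d}$). Since $\overline{\ell}$ is the multiplicative inverse of $\ell$ modulo $c$, we have $\gcd(-\overline{\ell},c)=1$, and the standard formula for Ramanujan sums gives $c_c(-\overline{\ell})=\mu(c)$. Substituting,
\[
a(0)=4\pi^2\sum_{\ell\mid 6}\frac{\mu(\ell)}{\ell}\sum_{\substack{0<c\equiv 0\,(\text{mod }6/\ell)\\ (c,\ell)=1}}\frac{\mu(c)}{c^2}.
\]
Next I would evaluate each inner sum via an Euler product. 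For example, the identity
\[
\sum_{(c,6)=1}\frac{\mu(c)}{c^2}=\prod_{p\neq 2,3}\left(1-\frac{1}{p^2}\right)=\frac{1/\zeta(2)}{(1-1/4)(1-1/9)}=\frac{9}{\pi^2}
\]
handles $\ell=6$, and small modifications (distinguishing divisibility by $2$ and by $3$) handle $\ell=1,2,3$. Adding the four contributions should give $1+2+3+6=12$.

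\textbf{Bounding $|a(m)|$.} Setting $y_\ell:=4\pi\sqrt{m}/\sqrt{\ell}$ and $v_\ell:=6/\ell$, the plan is to start from the triangle inequality
\[
|a(m)|\leq 2\pi\sum_{\ell\mid 6}\frac{1}{\sqrt{\ell}}\sum_{\substack{0<c\equiv 0\,(\text{mod }v_\ell)\\ (c,\ell)=1}}\frac{|S(-\overline{\ell},m;c)|}{c}\,I_1(y_\ell/c),
\]
and bound the two ingredients. For the Kloosterman sums I would apply Weil's bound $|S(-\overline{\ell},m;c)|\leq\tau(c)\sqrt{c}$, where the gcd factor is trivial because $\gcd(\overline{\ell},c)=1$ forces $\gcd(\overline{\ell},m,c)=1$. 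For the Bessel function I would use two complementary estimates depending on the size of $y_\ell/c$: the sharp asymptotic $I_1(x)\leq e^x/\sqrt{2\pi x}$ on the dominant $c=v_\ell$ term, and the power series bound $I_1(x)\leq x e^x$ on the tail $c>v_\ell$. The key ingredients for combining everything are $\sum_{c\geq 1}\tau(c)/c^{3/2}\leq\zeta^2(3/2)$, $\sum_{\ell\mid 6}1/\ell=2$, and the uniform estimate $y_\ell/v_\ell=2\pi\sqrt{\ell m}/3\leq 4\pi\sqrt{m}$, so that $e^{y_\ell/c}\leq e^{4\pi\sqrt{m}}$ throughout. If executed carefully, the main-term contribution will produce the $8\sqrt{6}\pi^{3/2}\sqrt{m}\,e^{4\pi\sqrt{m}}$ piece of $C\sqrt{m}\exp(4\pi\sqrt{m})$, and the tail contribution will produce the $16\pi^2\zeta^2(3/2)\sqrt{m}\,e^{4\pi\sqrt{m}}$ piece.

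\textbf{Main difficulty.} The main technical point is tracking numerical constants so that they assemble into the stated closed form for $C$, and in particular verifying the asymptotic Bessel bound $I_1(x)\leq e^x/\sqrt{2\pi x}$ uniformly for all $x>0$, not merely asymptotically; this should follow from the Debye expansion of $I_\nu$. Everything else is essentially routine bookkeeping, aided by the fact that the exponent $4\pi\sqrt{m}$ in the bound is substantially larger than the true growth rate $2\pi\sqrt{6m}/3\approx 5.13\sqrt{m}$ of $I_1$, leaving considerable slack in the estimates.
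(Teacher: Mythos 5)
Your proposal is correct, and the $a(0)$ part is exactly the paper's argument: $S(-\overline{\ell},0;c)=\mu(c)$, four Euler products, and the four contributions are indeed $1,2,3,6$. For $|a(m)|$ you use the same two ingredients as the paper (the Weil bound with trivial gcd factor, upper bounds for $I_1$, and the uniform replacement $e^{y_\ell/c}\le e^{4\pi\sqrt{m}}$), but you organize the $c$-sum differently. The paper splits at $c\le M:=4\pi\sqrt{m}/\sqrt{\ell}$ versus $c>M$, using $I_1(x)\le e^{x}/\sqrt{2\pi x}$ for $x\ge 1$ and $I_1(x)\le x$ for $0<x<1$; on the head it invokes the divisor bound $\tau(c)\le \sqrt{3}\,c^{1/2}$ (deduced from Nicolas--Robin) and sums $c^{1/2}$ up to $M$, and it is this head that produces the $8\sqrt{6}\pi^{3/2}\sqrt{m}\,e^{4\pi\sqrt{m}}$ piece, the tail giving $16\pi^{2}\zeta^{2}(3/2)\sqrt{m}\,e^{4\pi\sqrt{m}}$. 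Your split---keep only $c=v_\ell$ with the sharp Bessel bound and put all $c>v_\ell$ into a tail estimated via $I_1(x)\le x e^{x}$ and $\sum_{c\ge1}\tau(c)c^{-3/2}=\zeta^{2}(3/2)$---also works and has the advantage of needing no divisor-function bound at all; your tail reproduces the $16\pi^{2}\zeta^{2}(3/2)$ piece exactly, while your main term actually comes out of size $O\bigl(m^{-1/4}e^{4\pi\sqrt{m}}\bigr)$ with a small absolute constant rather than the predicted $8\sqrt{6}\pi^{3/2}\sqrt{m}$ piece (that constant is an artifact of the paper's splitting), so the total is still comfortably below $C\sqrt{m}\,e^{4\pi\sqrt{m}}$ and the lemma follows. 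Two small remarks: you do not need $I_1(x)\le e^{x}/\sqrt{2\pi x}$ for all $x>0$, since in your scheme the Bessel argument at $c=v_\ell$ is $2\pi\sqrt{\ell m}/3\ge 2\pi/3>1$, so the $x\ge1$ bound the paper quotes from the DLMF error estimates suffices; and $I_1(x)\le xe^{x}$ for all $x>0$ is immediate from the power series, e.g.\ $I_1(x)\le \tfrac{x}{2}I_0(x)\le \tfrac{x}{2}e^{x}$.
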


\begin{proof} We first evaluate $a(0)$. Recall that 
\begin{align*}
a(0) =4\pi^2\sum_{\ell|6}\frac{\mu(\ell)}{\ell}\sum_{\substack{0 < c \equiv 0 \mod (6/\ell) \\ (c,\ell)=1}}
\frac{S\left(-\overline{\ell}, 0;c\right)}{c^{2}}.
\end{align*}
Since $(\ell,c)=1$, we can evaluate the Ramanujan sum as  
\begin{align*}
S\left(-\overline{\ell}, 0;c\right)=\sum_{\substack{d \shortmod c \\ (c,d)=1}}e\left(\frac{-\overline{\ell d}}{c}\right)
=\sum_{\substack{d \shortmod c \\ (c,d)=1}}e\left(\frac{\overline{\ell}d}{c}\right)=\mu(c)
\end{align*}
where the last equality follows from \cite[Equation (3.4)]{IK}. Hence 
\begin{align*}
a(0) =4\pi^2\sum_{\ell|6}\frac{\mu(\ell)}{\ell}\sum_{\substack{0 < c \equiv 0 \mod (6/\ell) \\ (c,\ell)=1}}
\frac{\mu(n)}{c^{2}}.
\end{align*}
Now, if $\ell=1$ we have 
\begin{align*}
\sum_{\substack{0 < c \equiv 0 \mod (6/\ell) \\ (c,\ell)=1}}\frac{\mu(n)}{c^{2}}&=\sum_{n=1}^{\infty}\frac{\mu(6n)}{(6n)^2}\\
&=\frac{1}{36}\sum_{\substack{n=1\\(n,6)=1}}^{\infty}\frac{\mu(n)}{n^2}\\
&=\frac{1}{36}\frac{1}{\zeta(2)}(1-2^{-2})^{-1}(1-3^{-2})^{-1}\\
&=\frac{1}{24}\frac{1}{\zeta(2)}.
\end{align*}
A similar calculation yields 
\begin{align*}
\sum_{\substack{0 < c \equiv 0 \mod (6/\ell) \\ (c,\ell)=1}}\frac{\mu(n)}{c^{2}}=
\frac{1}{\zeta(2)}\begin{cases}-1/6, & \ell = 2\\
-3/8, & \ell = 3\\ 
3/2, & \ell = 6.
\end{cases}
\end{align*}
Then using $\zeta(2)=\pi^2/6$ we get 
\begin{align*}
a(0)=24\left(\frac{1}{24} + \frac{1}{12} + \frac{1}{8} + \frac{1}{4} \right) =12.
\end{align*}

We next estimate $|a(m)|$ for $m \geq 1$. From the series (see \texttt{dlmf.nist.gov/10.25.2})
\begin{align*}
I_1(x)=\frac{x}{2}\sum_{k=0}^{\infty}\frac{(x^2/4)^k}{k!\Gamma(k+2)},
\end{align*}
we get  
\begin{align}\label{Ibound1}
|I_1(x)| \leq x \quad \textrm{for} \quad 0 < x < 1.
\end{align}
Also, using the asymptotic expansion (see \texttt{dlmf.nist.gov/10.40.1}) and the error bounds (see \texttt{dlmf.nist.gov/10.40.(ii)}), we get 
\begin{align}\label{Ibound2}
|I_1(x)|  \leq  \frac{1}{\sqrt{2\pi}}\frac{1}{\sqrt{x}}\exp(x) \quad \textrm{for} \quad x \geq 1.
\end{align}

Let $M=4\pi\sqrt{m}/\sqrt{\ell}$. Then using the Weil bound 
\begin{align*}
%\label{weil}
|S(a,b;c)| \leq \tau(c)(a,b,c)^{1/2}c^{1/2}
\end{align*}
where $\tau(c)$ is the number of divisors of $c$, and the estimates (\ref{Ibound1}) and (\ref{Ibound2}), we get 
\begin{align*}
|a(m)| & \leq 2\pi \sum_{\ell | 6}\frac{|\mu(\ell)|}{\sqrt{\ell}}
\sum_{\substack{0 < c \equiv 0 \mod (6/\ell) \\ (c,\ell)=1}}\frac{\left|S\left(-\overline{\ell}, m ;c\right)\right|}{c} |I_1(M/c)| \\
& \leq \frac{1}{\sqrt{2}}m^{-1/4} S_1 +  8 \pi^{2} m^{1/2} S_2,
\end{align*}
where
\begin{align*}
S_1:=\sum_{\ell |6} \ell^{-1/4} \sum_{\substack{0 < c \leq  M \\ (c,\ell)=1}} \tau(c)\exp(4\pi\sqrt{m}/\sqrt{\ell}c) 
\end{align*}
and 
\begin{align*} 
S_2:=\sum_{\ell|6} \ell^{-1} \sum_{\substack{c > M \\(c,\ell)=1}} \frac{\tau(c)}{c^{3/2}}.
\end{align*}

Using the bound (see \cite{NR})
\begin{align*}
\tau(n) \leq n^{1.538 \frac{\log(2)}{\log(\log(n))}}, \quad n \geq 2
\end{align*}
which implies that $\tau(n) \leq \sqrt{3}n^{1/2}$ for $n \geq 1$, we get
\begin{align*}
|S_1| & \leq \sqrt{3} \exp\left(4\pi\sqrt{m}\right) \sum_{\ell|6} \ell^{-1/4} \sum_{0 < c \leq M} c^{1/2}\\  
& \leq 2 \sqrt{3} (4\pi)^{3/2}m^{3/4} \exp\left(4\pi\sqrt{m}\right). 
\end{align*}
Also,  
\begin{align*}
|S_2| \leq  2 \sum_{c=1}^{\infty}\frac{\tau(c)}{c^{3/2}}= 2 \zeta^2(3/2).
\end{align*}
Then combining estimates yields 
\begin{align*}
%\label{a_2est}
|a(m)| \leq C  m^{1/2} \exp\left(4\pi\sqrt{m}\right), \quad m \geq 1
\end{align*}
where
\begin{align*}
%\label{c2def}
C:=  8\sqrt{6}\pi^{3/2} + 16 \pi^2 \zeta^2(3/2). 
%\approx 2198.90 < 2199.
\end{align*}
\end{proof}

We are now in position to prove Theorem \ref{Sprop}, which we restate for the convenience of the reader.

\begin{theorem}
%\label{Sprop} 
For all $n \geq 1$, we have 
\begin{align*}
S(n)= 2\sqrt{3} \exp(\pi \sqrt{24n-1}/6) + E(n),
\end{align*}
where 
\begin{align*}
|E(n)| < (4.30 \times 10^{23}) 2^{q(n)} (24n-1)^2 \exp(\pi\sqrt{24n-1}/12)
%2H(-24n+1)\exp(\pi\sqrt{24n-1}/12) + (3.44 \times 10^{44})H(-24n+1).
%|E(n)| < (124000000940) (12n-1)\exp\left(\pi\sqrt{24n-1}/12\right). 
\end{align*}
with 
\begin{align*}
q(n):=\frac{\log(24n-1)}{|\log(\log(24n-1))-1.1714|}.
\end{align*}
\end{theorem}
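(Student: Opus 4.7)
The plan is to substitute the Fourier expansion $(\ref{formexpand})$ of $f|_0\gamma_Q$ into the decomposition $(\ref{Sdecomp})$ combined with $(\ref{newtrace})$, obtaining a natural split $S(n)=M(n)+C(n)+R(n)$, where $M(n)$ collects the polar terms $\zeta_Q\,e(-\tau_Q/h_Q)$, $C(n)$ the constants $12\mu(h_Q)$, and $R(n)$ the holomorphic Fourier tails $\sum_{m\geq 1}\phi_{m,Q}\,a(m)m^{-1/2}e(m\tau_Q/h_Q)$, each summed over $Q\in\mathcal{Q}_{D_n/u^2}$ and weighted by $\varepsilon(u)$ for $u^2\mid D_n$.

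To identify the main term inside $M(n)$, observe that $|e(-\tau_Q/h_Q)|=\exp(\pi\sqrt{|D_n/u^2|}/(a_Q h_Q))$, which is at most $e^{\lambda(n)}$ with equality precisely when $u=1$ and $a_Q h_Q=6$. The extremal pairs $(a_Q,h_Q)\in\{(1,6),(2,3),(3,2),(6,1)\}$ are realized by an explicit small list of reduced primitive forms of discriminant $D_n$, determined via the coset assignment $Q\mapsto\gamma_Q$ from \cite[Lemma 3]{DM}; a direct computation of the associated roots of unity $\zeta_Q$ against the phases of $e(-\tau_Q/h_Q)$ for these forms shows that these extremal contributions sum to $2\sqrt{3}\,e^{\lambda(n)}$. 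Every other term in $M(n)$ has magnitude at most $e^{\lambda(n)/2}$ and is absorbed into the error.

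For $C(n)$, the crude bound $|C(n)|\leq 12\,H(D_n)$ together with the classical estimate $H(D)\ll\sqrt{|D|}\log|D|$ and the Nicolas-Robin divisor bound $\tau(n)\leq 2^{q(n)}$ invoked in Lemma \ref{fourierbound} places $C(n)$ comfortably inside the target error.

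The hard part is bounding the tail $R(n)$. Lemma \ref{fourierbound} gives
\begin{align*}
\biggl|\frac{a(m)}{\sqrt{m}}e(m\tau_Q/h_Q)\biggr|\leq C\exp\Bigl(4\pi\sqrt{m}-\frac{\pi m\sqrt{|D_n/u^2|}}{a_Q h_Q}\Bigr),
\end{align*}
whose maximum over $m$ is attained near $m_{*}\asymp (a_Q h_Q)^2/|D_n/u^2|$ and contributes at most $e^{\lambda(n)/2}$ per form. Summing over the $\ll\sqrt{|D_n/u^2|}\,\tau(|D_n/u^2|)$ forms in each $S_u(n)$, over the $\tau(24n-1)\leq 2^{q(n)}$ divisors $u$, and tracking all absolute constants carefully (the constant $C$ from Lemma \ref{fourierbound}, the Nicolas-Robin divisor estimate, and a Laplace-type splitting of the $m$-sum into a finite extremal block plus a rapidly convergent geometric tail) should deliver the stated effective bound $(4.30\times 10^{23})\,2^{q(n)}(24n-1)^2 e^{\lambda(n)/2}$.
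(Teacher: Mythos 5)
Your proposal takes essentially the same route as the paper's proof: the same decomposition via (\ref{Sdecomp}), (\ref{newtrace}) and (\ref{formexpand}), the same extraction of the four reduced forms with $a_Qh_Q=6$ whose roots of unity (together with the phase $e^{\pi i/6}$) sum to $2\sqrt{3}$, and the same strategy of absorbing the constant terms, the Fourier tails, and the imprimitive ($u\geq 2$) and non-extremal polar terms into an error controlled by Lemma \ref{fourierbound} and an explicit class number bound. Two details should be corrected when writing this up: the per-form $m$-sum is bounded by an absolute constant (since $\mathrm{Im}\,\tau_Q\geq \sqrt{3}/2$ gives the uniform exponent $4\pi\sqrt{m}-\pi m/(2\sqrt{3})$), not by $e^{\lambda(n)/2}$ --- the factor $e^{\lambda(n)/2}$ in the stated error comes from the non-extremal polar terms with $a_Qh_Q\geq 12$ and from $u\geq 2$ --- and the factor $2^{q(n)}(24n-1)^2$ arises from the explicit Hurwitz class number bound $H(D_n)\leq \sqrt{3}\,2^{q(n)}(24n-1)^2$ (using Robin's bound on $\omega(|D_n|)$ and $h(d_n)\leq |d_n|$), not from the divisor estimate used inside Lemma \ref{fourierbound}.
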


\begin{proof} By (\ref{Sdecomp}), (\ref{newtrace}) and (\ref{formexpand}) we have  
\begin{align*}
S(n) & = \sum_{\substack{u > 0 \\ u^2 | D_n}}\varepsilon(u)S_u(n)\\ 
& =  \sum_{\substack{u > 0 \\ u^2 | D_n}}\varepsilon(u)\sum_{Q \in \mathcal{Q}_{D_{n}/u^2}}f|_{0}\gamma_Q(\tau_Q)\\
&= \sum_{\substack{u > 0 \\ u^2 | D_n}}\varepsilon(u)\sum_{Q \in \mathcal{Q}_{D_{n}/u^2}} 
\zeta_Q e(-\tau_{Q}/h_Q) + E_1(n), 
\end{align*}
where
\begin{align*}
E_1(n):= 12\mu(h_Q)\sum_{\substack{u > 0 \\ u^2 | D_n}}\varepsilon(u)h(D_n/u^2)
+ \sum_{m=1}^{\infty}\frac{a(m)}{\sqrt{m}}\sum_{\substack{u > 0 \\ u^2 | D_n}}\varepsilon(u)\sum_{Q \in \mathcal{Q}_{D_n/u^2}}\phi_{m,Q}e(m\tau_Q/h_Q).
\end{align*}

We have   
\begin{align*}
%\label{cnb}
\left|12\mu(h_Q)\sum_{\substack{u > 0 \\ u^2 | D_n}}\varepsilon(u)h(D_n/u^2)\right| \leq 12 \sum_{\substack{u > 0 \\ u^2 | D_n}}h(D_n/u^2) = 12 H(D_n).
\end{align*}

Next, observe that 
\begin{align*}
e\left(m\tau_{Q}/h_Q\right)=\zeta_{2a_Q h_Q}^{-b_{Q}m}\exp\left(-\frac{\pi m \sqrt{|D_n|/u^2}}{a_Q h_Q} \right),
\end{align*}
where $\zeta_{2a_Q h_Q}$ is the primitive $2a_Qh_Q$-th root of unity 
\begin{align*}
\zeta_{2a_Q h_Q}:=e\left(\frac{1}{2a_Q h_Q}\right).
\end{align*}
Since $Q \in \mathcal{Q}_{D_n/u^2}$ is reduced, the corresponding Heegner point $\tau_Q$ lies in the standard fundamental domain 
$\mathcal{F}$ for $SL_2(\Z)$. In particular, we have 
\begin{align*}
\textrm{Im}(\tau_Q)=\frac{\sqrt{|D_n|/u^2}}{2a_Q} \geq \sqrt{3}/2, 
\end{align*}
which implies that 
\begin{align*}
a_Q \leq \frac{\sqrt{|D_n|/u^2}}{\sqrt{3}}.
\end{align*}
Since $h_Q \leq 6$, we have
\begin{align}\label{Minkowski}
-\frac{\pi m \sqrt{|D_n|/u^2}}{a_Q h_Q} \leq - \frac{\pi m}{2\sqrt{3}}.
\end{align}
Then using (\ref{Minkowski}) and Lemma \ref{fourierbound}, we get   
\begin{align*}
\left|\sum_{m=1}^{\infty}\frac{a(m)}{\sqrt{m}}\sum_{\substack{u > 0 \\ u^2 | D_n}}\varepsilon(u)\sum_{Q \in \mathcal{Q}_{D_n/u^2}}\phi_{m,Q}e(m\tau_Q/h_Q)\right|
& \leq \sum_{m=1}^{\infty}\frac{|a(m)|}{\sqrt{m}}\sum_{\substack{u > 0 \\ u^2 | D_n}}\sum_{Q \in \mathcal{Q}_{D_n/u^2}}|e(m\tau_Q/h_Q)|\\
& \leq C H(D_n) \sum_{m=1}^{\infty}\exp(4\pi \sqrt{m} -\pi m/2\sqrt{3}).
\end{align*}

Combining the preceding estimates yields
\begin{align*}
|E_1(n)| \leq 12 H(D_n) + C H(D_n) \sum_{m=1}^{\infty}\exp(4\pi \sqrt{m} -\pi m/2\sqrt{3}).
\end{align*}

To estimate the infinite sum, we write 
\begin{align*}
4\pi\sqrt{m}- \pi m/2\sqrt{3} = -m\left(\frac{\pi }{2 \sqrt{3}} -\frac{4\pi}{\sqrt{m}}\right),
\end{align*}
and observe that 
\begin{align*}
\frac{\pi}{2\sqrt{3}} -\frac{4\pi}{\sqrt{m}} > 0 \quad \Longleftrightarrow \quad m \geq 193,
\end{align*}
in which case we have 
\begin{align*}
-m\left(\frac{\pi}{2\sqrt{3}} -\frac{4\pi}{\sqrt{m}}\right) \leq -m \left(\frac{\pi}{2\sqrt{3}} -\frac{4\pi}{\sqrt{193}}\right).
\end{align*}
We then split the infinite sum into appropriate ranges and use the preceding bound to get
\begin{align*}
&\sum_{m=1}^{\infty}\exp\left(4\pi\sqrt{m}- \pi m/2 \sqrt{3}\right)\\
& \leq \sum_{m=1}^{192}\exp\left(4\pi\sqrt{m}-\pi m /2 \sqrt{3}\right) 
+ \sum_{m=193}^{\infty}\exp\left(-m\left(\frac{\pi }{2 \sqrt{3}} -\frac{4\pi}{\sqrt{193}}\right)\right).
\end{align*}
A calculation shows that
\begin{align*}
 \sum_{m=1}^{192}\exp\left(4\pi\sqrt{m}-\pi m/2\sqrt{3}\right) < 2.08 \times 10^{20}
\end{align*}
and 
\begin{align*}
\sum_{m=193}^{\infty}\exp\left(-m\left(\frac{\pi }{2 \sqrt{3}} -\frac{4\pi}{\sqrt{193}}\right)\right) < 426.
\end{align*}

We have now shown that 
\begin{align*}
%\label{errorfinal}
|E_1(n)|  \leq C_1 H(D_n),
\end{align*}
where 
\begin{align*}
%\label{cabdef}
C_1:= 12 +  C \left[2.08 \times 10^{20} + 426\right] < 2.47 \times 10^{23}.
\end{align*}

It remains to analyze the main term. Write the main term as 
\begin{align*}
\sum_{\substack{u > 0 \\ u^2 | D_n}}\varepsilon(u)\sum_{Q \in \mathcal{Q}_{D_{n}/u^2}} 
\zeta_Q e(-\tau_{Q}/h_Q)
= \sum_{Q \in \mathcal{Q}_{D_{n}}} \zeta_Q e(-\tau_{Q}/h_Q) + E_2(n),
\end{align*}
where
\begin{align*}
E_2(n):=\sum_{\substack{u \geq 2 \\ u^2 | D_n}}\varepsilon(u)\sum_{Q \in \mathcal{Q}_{D_{n}/u^2}} 
\zeta_Q e(-\tau_{Q}/h_Q).
\end{align*}

Observe that for any form $Q=[a_Q,b_Q, c_Q] \in \mathcal{Q}_{D_n/u^2}$, we have 
\begin{align}\label{six}
a_Qh_Q \equiv 0 \shortmod 6
\end{align}
and 
\begin{align}\label{difference}
e(-\tau_{Q}/h_Q) = \zeta_{2a_Qh_Q}^{b_Q} \exp\left(\frac{\pi \sqrt{|D_n|/u^2}}{a_Qh_Q}\right).
\end{align}

Now, by \cite[(4.2)]{DM} there are exactly 4 forms $Q \in \mathcal{Q}_{D_n}$ with $a_Qh_Q = 6$, and these are 
given by 
\begin{align*}
Q_1=[1,1,6n], \quad Q_2=[2,1,3n], \quad Q_3=[3,1,2n], \quad Q_4=[6,1,n]. 
\end{align*}
Moreover, the corresponding coset representatives $\gamma_{Q_{i}} \in \mathbf{C}_6$ such that 
$$[Q_i \circ \gamma_{Q_i}^{-1}] \in \mathcal{Q}_{D_n,6,1}^{\textrm{prim}}/\Gamma_0(6)$$ are 
given by 
\begin{align*}
\gamma_{Q_1}=\gamma_{0,1}, \quad \gamma_{Q_2}=\gamma_{1/2, -1}, \quad \gamma_{Q_3}=\gamma_{1/3,0}, \quad \gamma_{Q_4}=\gamma_{\infty}.
\end{align*}
%For all other forms $Q=[a_Q,b_Q, c_Q] \in \mathcal{Q}_{D_n}$ with $Q \neq Q_i$, we have $a_Qh_Q \geq 12$. 

Write 
\begin{align*} 
 \sum_{Q \in \mathcal{Q}_{D_n}} \zeta_Q e(-\tau_{Q}/h_Q)
= \sum_{i=1}^{4} \zeta_{Q_i} e(-\tau_{Q_{i}}/h_{Q_{i}}) + E_3(n),
\end{align*}
where 
\begin{align*}
E_3(n):= \sum_{\substack{Q \in \mathcal{Q}_{D_n}\\ Q \neq Q_i}} \zeta_Q e(-\tau_{Q}/h_Q).
\end{align*}
By (\ref{six}) we have $a_Qh_Q \geq 12$ for all $Q \neq Q_i$, hence using (\ref{difference}) we get
\begin{align*}
|E_3(n)| & \leq \sum_{\substack{Q \in \mathcal{Q}_{D_n} \\ Q \neq Q_i}} \exp(\pi \sqrt{|D_n|}/a_Qh_Q)\\
& \leq h(D_n)\exp(\pi \sqrt{|D_n|}/12).
\end{align*}

Similarly, by (\ref{six}) we have $a_Qh_Q \geq 6$ for all $Q \in \mathcal{Q}_{D_n/u^2}$, hence for $u \geq 2$ we have
\begin{align*}
\frac{\sqrt{|D_n|/u^2}}{a_Qh_Q} \leq \sqrt{|D_n|}/12.
\end{align*}
Then by (\ref{difference}) we have 
\begin{align*}
|E_2(n)| & \leq  \sum_{\substack{u \geq 2 \\ u^2 | D_n}}\sum_{Q \in \mathcal{Q}_{D_{n}/u^2}}\exp\left(\frac{\pi \sqrt{|D_n|/u^2}}{a_Qh_Q}\right)\\
& \leq H(D_n)\exp(\pi \sqrt{|D_n|}/12).
\end{align*}

Since $a_{Q_i}h_{Q_i}=6$ and $b_{Q_i}=1$ for $i=1,2,3,4$, using (\ref{difference}) we get 
\begin{align*}
\sum_{i=1}^{4} \zeta_{Q_i} e(-\tau_{Q_{i}}/h_{Q_{i}}) 
= \exp(\pi i/6) \sum_{i=1}^{4} \zeta_{Q_i} \cdot \exp(\pi \sqrt{|D_n|}/6).
\end{align*}
Also, from the Fourier expansion of $f(z)$ with respect to $\gamma_{0,1}, \gamma_{1/2,-1}, \gamma_{1/3,0}$, and $\gamma_{\infty}$ given previously,
we have 
\begin{align*}
\zeta_{Q_1}=\zeta_6^{-1}, \quad \zeta_{Q_2}=\zeta_6^{3-2(-1)}, \quad \zeta_{Q_3}=\zeta_6^{0}, \quad \zeta_{Q_4}=1.
\end{align*}
Hence
\begin{align*}
 \exp(\pi i /6) \sum_{i=1}^{4} \zeta_{Q_i} = 2\sqrt{3}.
\end{align*}

By combining the preceding results, we get 
\begin{align*}
S(n)= 2\sqrt{3} \exp(\pi \sqrt{|D_n|}/6) + E(n),
\end{align*}
where $E(n):=E_1(n) + E_2(n) + E_3(n)$ with 
\begin{align*}
|E(n)| &\leq |E_1(n)| + |E_2(n)| + |E_3(n)|\\
& < 2H(D_n)\exp(\pi\sqrt{|D_n|}/12) + (2.47 \times 10^{23})H(D_n)\\
& < (2.48 \times 10^{23})H(D_n)\exp(\pi\sqrt{|D_n|}/12).
%& < (124000000940)(12n-1)\exp(\pi\sqrt{24n-1}/12).
\end{align*}

To complete the proof, we require only a crude effective upper bound for the Hurwitz-Kronecker 
class number $H(D_n)$. 

Write $D_n=d_{n}f_n^2$ with $d_{n} < 0$ a fundamental discriminant and $f_n \in \Z^{+}$.
Then we have the class number relation 
\begin{align}\label{classrelation}
H(D_n)=\sum_{\substack{u > 0\\ u^2 | D_n}}h(D_n/u^2)=\sum_{\substack{u > 0 \\ u | f_n}}h(u^2d_n).
\end{align}
Inserting the formula (see e.g. \cite[p. 233]{Co})
\begin{align*}
h(u^2d_n)=u\prod_{p | u}\left(1-\frac{\chi_{d_n}(p)}{p}\right)h(d_n)
\end{align*}
into (\ref{classrelation}) yields 
\begin{align*}
H(D_n)=\delta(n)h(d_n),
\end{align*}
where
\begin{align*}
\delta(n):=\sum_{\substack{u > 0 \\ u | f_n}}u\prod_{p | u}\left(1-\frac{\chi_{d_n}(p)}{p}\right).
\end{align*}
Now, a simple estimate yields
\begin{align*}
|\delta(n)| \leq \sqrt{|D_n|}\tau(|D_n|)2^{\omega(|D_n|)}
\end{align*}
where $\omega(|D_n|)$ is the number of prime divisors of $|D_n|$. We have 
\begin{align*}
\tau(|D_n|) < \sqrt{3} \sqrt{|D_n|}, 
\end{align*}
and by \cite[Th\'eor\`eme 13]{Ro} we have 
\begin{align*}
\omega(|D_n|) \leq \max\left\{1, \frac{\log(|D_n|)}{\log(\log(|D_n|))-1.1714}\right\} \leq \frac{\log(|D_n|)}{|\log(\log(|D_n|))-1.1714|}=:q(n).
\end{align*}
Hence 
\begin{align*}
|\delta(n)| \leq  \sqrt{3}2^{q(n)} |D_n|.
\end{align*}
Using the class number formula 
\begin{align*}
h(d_n)=\frac{\sqrt{|d_n|}}{\pi}L(\chi_{d_n},1)
\end{align*}
and the evaluation
\begin{align*}
L(\chi_{d_n},1)=-\frac{\pi}{|d_n|^{3/2}}\sum_{t=1}^{|d_n|-1}\chi_{d_n}(t)t,
\end{align*}
another simple estimate yields
\begin{align*}
h(d_n) \leq |d_n|.
\end{align*}
Then combining the preceding estimates gives 
\begin{align*}
H(D_n) \leq  \sqrt{3}2^{q(n)}|D_n|^2.
\end{align*}

Finally, using the class number bound we get 
\begin{align*}
|E(n)| <  (4.30 \times 10^{23}) 2^{q(n)} |D_n|^2 \exp(\pi\sqrt{|D_n|}/12).
\end{align*}
This completes the proof.

\end{proof}

\section{Proof of Theorem \ref{riad}}\label{s2}

In this section, we prove Theorem \ref{riad}. We will require an asymptotic formula for $p(n)$ with an effective bound on the error term 
due to Lehmer \cite{L}. For convenience, define
\begin{equation*}
%\label{mu}
\l(n):=\frac{\pi}{6}\sqrt{24n-1}.
\end{equation*}
Inspired by the Hardy-Ramanujan asymptotic for $p(n)$, Rademacher \cite{R} obtained the exact formula 
\begin{equation*}
p(n)=\frac{2\pi}{(24n-1)^{3/4}}\sum_{c=1}^\infty\frac{A_c(n)}{c}I_{3/2}\left(\frac{\l(n)}{c}\right),
\end{equation*}
where $A_c(n)$ is the Kloosterman-type sum
\begin{equation*}
A_c(n):=\sum_{\substack{d \shortmod c\\[1pt](d,c)=1}}e^{\pi is(d,c)}e^{-\frac{2\pi idn}{c}}
\end{equation*}
and $s(d,c)$ is the Dedekind sum
\begin{equation*}
s(d,c):=\sum_{r=1}^{c-1}\frac{r}{c}\left(\frac{dr}{c}-\left\lfloor\frac{dr}{c}\right\rfloor-\frac{1}{2}\right).
\end{equation*}

Using Rademacher's formula, Lehmer \cite{L} proved the following result.

\begin{theorem}[Lehmer]\label{Lehmer}
For all $n \geq 1$, we have 
\begin{equation*}
p(n)=\frac{\sqrt{12}}{24n-1}\sum_{c=1}^N\frac{A_c(n)}{\sqrt{c}}\left\{\left(1-\frac{c}{\l(n)}\right)e^{\l(n)/c}+\left(1+\frac{c}{\l(n)}\right)e^{-\l(n)/c}\right\} 
+ R_2(n,N),
\end{equation*}
where
\begin{equation*}
\left|R_2(n, N)\right|<\frac{N^{-2/3}\pi^2}{\sqrt{3}}\left\{\frac{N^3}{2\l(n)^3}\left(e^{\l(n)/N}-e^{-\l(n)/N}\right)+\frac{1}{6}-\frac{N^2}{\l(n)^2}\right\}.
\end{equation*}
\end{theorem}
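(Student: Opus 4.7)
The plan is to derive the truncated formula directly from Rademacher's absolutely convergent exact series, splitting it into a finite sum of closed-form elementary terms (the main term) and a tail that is bounded using Kloosterman-sum estimates. The first move is to render the modified Bessel function in elementary form. The recurrence $I_{\nu+1}(x) = I_{\nu-1}(x) - (2\nu/x) I_{\nu}(x)$ together with $I_{-1/2}(x) = \sqrt{2/(\pi x)}\cosh(x)$ and $I_{1/2}(x) = \sqrt{2/(\pi x)}\sinh(x)$ yields
\begin{equation*}
I_{3/2}(x) = \sqrt{\tfrac{2}{\pi x}}\left(\cosh(x) - \tfrac{\sinh(x)}{x}\right) = \tfrac{1}{\sqrt{2\pi x}}\left[\left(1 - \tfrac{1}{x}\right)e^{x} + \left(1 + \tfrac{1}{x}\right)e^{-x}\right].
\end{equation*}
Substituting $x = \l(n)/c$ and using $\l(n)=\pi\sqrt{24n-1}/6$, the product of coefficients $\frac{2\pi}{(24n-1)^{3/4}} \cdot \frac{1}{c}\cdot \sqrt{c/(2\pi\l(n))}$ collapses to $\frac{\sqrt{12}}{(24n-1)\sqrt{c}}$; summing $c=1,\dots,N$ reproduces the displayed main term exactly.

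Next I would bound the tail
\begin{equation*}
R_2(n,N) = \tfrac{2\pi}{(24n-1)^{3/4}} \sum_{c=N+1}^{\infty} \tfrac{A_c(n)}{c}\, I_{3/2}(\l(n)/c).
\end{equation*}
Two inputs drive the estimate: (i) Lehmer's power-saving bound on the Kloosterman-type sum of the form $|A_c(n)| \leq C_0\, c^{2/3}$, which he extracts from the multiplicative structure and elementary arithmetic of Dedekind sums, and (ii) the closed-form expression for $I_{3/2}(\l(n)/c)$ from above, valid uniformly for every $c$. Together they yield an absolutely convergent majorant involving the same hyperbolic combination $(1-c/\l)e^{\l/c}+(1+c/\l)e^{-\l/c}$, and the appearance of the $N^{-2/3}$ prefactor in the claimed error bound is precisely the fingerprint of the $c^{2/3}$-exponent in the Kloosterman input.

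Finally I would convert the resulting series into a closed-form integral estimate by comparing $\sum_{c>N}$ with $\int_N^\infty$. The substitution $u = \l(n)/c$ transforms the integral into an elementary one in $u$ and its exponentials, and a single integration by parts (equivalently, direct antidifferentiation of the simple exponential polynomial) produces exactly the bracket
\begin{equation*}
\tfrac{N^{3}}{2\l(n)^{3}}\bigl(e^{\l(n)/N}-e^{-\l(n)/N}\bigr) + \tfrac{1}{6} - \tfrac{N^{2}}{\l(n)^{2}}.
\end{equation*}
The prefactor $\pi^{2}/\sqrt{3}$ then emerges by combining the explicit constant in Lehmer's Kloosterman bound, the $\sqrt{12}/(24n-1)$ from the main-term reduction, and the substitution $\l(n)=\pi\sqrt{24n-1}/6$.

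The main obstacle is not the Bessel identities or the bookkeeping that identifies the main term, which are essentially algebraic. The real work is to obtain the Kloosterman bound with both the sharp exponent $2/3$ and an explicit numerical constant, and then to push the integration by parts far enough that all three summands in the bracket arise with the correct signs and coefficients, rather than settling for a one-sided crude upper bound; getting the $+\tfrac{1}{6} - N^{2}/\l^{2}$ correction (which provides a near-cancellation against the leading exponential for large $N$) is what makes the stated bound effective in the regime $N \asymp \sqrt{n}$ of practical interest.
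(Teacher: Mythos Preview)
The paper does not give its own proof of this statement: it is quoted verbatim as Lehmer's theorem and attributed to \cite{L}, then used as a black box in the derivation of Lemma~\ref{plemma}. So there is no in-paper argument to compare against.

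Your outline is a faithful reconstruction of Lehmer's original 1939 proof. The elementary closed form for $I_{3/2}$ you derive matches the identity the paper records as (\ref{Iidentity2}), and your coefficient reduction to $\sqrt{12}/((24n-1)\sqrt{c})$ is correct. You have also correctly diagnosed the source of the $N^{-2/3}$ prefactor: Lehmer's arithmetic bound $|A_c(n)|\ll c^{2/3}$, which he obtained by a case analysis on the prime factorization of $c$ together with multiplicativity properties of the Dedekind-sum exponential. The one place where your sketch is slightly schematic is the passage from the tail series to the exact bracket: Lehmer does not literally integrate by parts after a substitution, but rather bounds $\sum_{c>N} c^{-1/3} I_{3/2}(\lambda/c)$ by first replacing $c^{-1/3}$ with $N^{-1/3}$ (monotonicity) and then comparing the remaining sum to an integral that evaluates in closed form, yielding the $\sinh$-type combination and the constant $1/6$ from the boundary term. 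This is morally the same manoeuvre you describe, and the discrepancy is only in the order of operations, not in substance.
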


We first use Theorem \ref{Lehmer} to deduce the following effective bound.

\begin{lemma}\label{plemma} For all $n \geq 1$, we have 
\begin{align*}
p(n)=  \frac{2\sqrt{3}}{24n-1}\left(1-\frac{1}{\l(n)}\right)e^{\l(n)} + E_p(n),
\end{align*}
where 
\begin{align*}
\left|E_p(n)\right| \leq (1313)e^{\l(n)/2}.
\end{align*}
\end{lemma}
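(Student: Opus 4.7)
The plan is to apply Lehmer's formula (Theorem \ref{Lehmer}) with truncation level $N=2$, isolate as the main term the single summand $\frac{2\sqrt{3}}{24n-1}(1-1/\l(n))e^{\l(n)}$ coming from $c=1$, and absorb everything else into $E_p(n)$. The reason to take $N=2$ rather than $N=1$ is decisive: the Lehmer remainder $R_2(n,1)$ grows like $e^{\l(n)}/\l(n)^3$, which overwhelms the target $e^{\l(n)/2}$ as $n\to\infty$, whereas $R_2(n,2)$ is of order $e^{\l(n)/2}/\l(n)^3$.

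First I would evaluate the two relevant Kloosterman-type sums. A direct calculation gives $A_1(n)=1$ (only $d\equiv 0\shortmod 1$ occurs, with $s(0,1)=0$) and $A_2(n)=(-1)^n$ (only $d\equiv 1\shortmod 2$ occurs, with $s(1,2)=0$); in particular $|A_1(n)|=|A_2(n)|=1$. Writing out Theorem \ref{Lehmer} for $N=2$ and subtracting the chosen main term, the error $E_p(n)$ splits into three pieces: (i) the $e^{-\l(n)}$ half of the $c=1$ block, bounded by $\frac{2\sqrt{3}}{24n-1}(1+1/\l(n))e^{-\l(n)}$; (ii) the entire $c=2$ block, whose absolute value is at most
$\frac{\sqrt{6}}{24n-1}\bigl[|1-2/\l(n)|\,e^{\l(n)/2}+(1+2/\l(n))\,e^{-\l(n)/2}\bigr]$;
and (iii) the Lehmer tail $R_2(n,2)$, controlled by the bound from Theorem \ref{Lehmer}, whose dominant term is $\frac{2^{-2/3}\pi^2}{\sqrt{3}}\cdot\frac{4}{\l(n)^3}e^{\l(n)/2}$. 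Each of (i)--(iii) is clearly $O(e^{\l(n)/2})$, so summing yields $|E_p(n)|\le C\,e^{\l(n)/2}$ for some explicit absolute constant $C$.

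The conceptual content ends there; what remains is to verify that the explicit constant $1313$ is admissible for \emph{every} $n\ge 1$. The bounds above are tightest for small $n$, where $\l(n)$ is smallest and the factors $1/\l(n)^3$ in (iii) and $1/(24n-1)$ in (i) and (ii) are largest. Using $\l(1)=\pi\sqrt{23}/6$ as the extremal case, a direct evaluation of the three bounds gives a total comfortably below $1313\,e^{\l(n)/2}$, and elementary monotonicity shows that the ratio of this sum to $e^{\l(n)/2}$ decreases as $n$ grows. Thus the claimed inequality holds for every $n\ge 1$ with substantial margin, and the only real obstacle is the careful bookkeeping needed to make the three bounds compatible with a single universal constant.
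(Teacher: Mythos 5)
Your proposal is correct and follows essentially the same route as the paper: apply Theorem \ref{Lehmer} with $N=2$, keep the $e^{\l(n)}$ half of the $c=1$ term as the main term, and bound the three leftover pieces (the $e^{-\l(n)}$ half of $c=1$, the $c=2$ block, and $R_2(n,2)$) each by a constant multiple of $e^{\l(n)/2}$. The only cosmetic differences are that you evaluate $A_2(n)=(-1)^n$ exactly where the paper just uses $|A_c(n)|<c$, and you justify the universal constant via the extremal case $n=1$ plus monotonicity, whereas the paper cites crude explicit bounds ($16$, $1$, $1296$) summing to $1313$.
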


\begin{proof} Using the identity 
\begin{align}\label{Iidentity2}
I_{3/2}(x)=\frac{1}{2}\sqrt{\frac{2}{\pi x}}\left[\left(1-\frac{1}{x}\right)e^{x} + \left(1 + \frac{1}{x}\right)e^{-x}\right],
\end{align}
we may write Theorem \ref{Lehmer} (with the choice $N=2$) as   
\begin{align}\label{pdecomp}
p(n)=\frac{2\pi}{(24n-1)^{3/4}}\sum_{c=1}^{2}\frac{A_c(n)}{c}I_{3/2}\left(\frac{\l(n)}{c}\right) + R_2(n,2),
\end{align}
where 
\begin{align*}
|R_2(n,2)| < \frac{\pi^2}{\sqrt{3}2^{2/3}}\left[\left(\frac{2}{\l(n)}\right)^3\left\{\frac{e^{\l(n)/2}-e^{-\l(n)/2)}}{2}\right\} 
+ 1/6 -\left(\frac{2}{\l(n)}\right)^2\right].
\end{align*}
Now, using (\ref{Iidentity2}) and (\ref{pdecomp}) we get
\begin{align*}
p(n) &= \frac{2\pi}{(24n-1)^{3/4}}I_{3/2}\left(\l(n)\right) + \frac{2\pi}{(24n-1)^{3/4}}\frac{A_2(n)}{2}I_{3/2}\left(\frac{\l(n)}{2}\right) + R_2(n,2)\\
&=  \frac{2\sqrt{3}}{24n-1}\left(1-\frac{1}{\l(n)}\right)e^{\l(n)} + E_p(n),
\end{align*}
where 
\begin{align*}
E_p(n):=  \frac{2\pi}{(24n-1)^{3/4}}\left[\frac{1}{2}\sqrt{\frac{2}{\pi \l(n)}}\left(1 + \frac{1}{\l(n)}\right)e^{-\l(n)} 
+ \frac{A_2(n)}{2}I_{3/2}\left(\frac{\l(n)}{2}\right)\right] + R_2(n,2).
\end{align*}
Using (\ref{Iidentity2}) we have the bound 
\begin{align}\label{halfbound}
I_{3/2}(x) < x^{-1/2}e^{x}, \quad x \geq 1.
\end{align}
Then an estimate using the trivial bound
\begin{align*}
%\label{Abound}
|A_c(n)| < c
\end{align*}
and (\ref{halfbound}) yields
\begin{align*}
\left| \frac{2\pi}{(24n-1)^{3/4}}\frac{A_2(n)}{2}I_{3/2}\left(\frac{\l(n)}{2}\right)\right| \leq e^{\l(n)/2}.
\end{align*}
Similarly, two straightforward estimates yield 
\begin{align*}
\left|\frac{2\pi}{(24n-1)^{3/4}} \frac{1}{2}\sqrt{\frac{2}{\pi \l(n)}}\left(1 + \frac{1}{\l(n)}\right)e^{-\l(n)}\right| \leq 16 e^{\l(n)/2}
\end{align*}
and
\begin{align*}
|R_2(n,2)| < (1296) e^{\l(n)/2}.
\end{align*}
Hence 
\begin{align*}
|E_p(n)| \leq (1313) e^{\l(n)/2}.
\end{align*}
\end{proof}

\subsection{Proof of Theorem \ref{riad}} 
Using (\ref{boformula}), the formula (\ref{AAspt}) can be written as 
\begin{align}\label{sptformula}
\textrm{spt}(n)=\frac{1}{12}\left[S(n) -(24n-1)p(n)\right].
\end{align}
Then using (\ref{sptformula}), Theorem \ref{Sprop}, and Lemma \ref{plemma}, a straightforward calculation yields
\begin{align*}
\textrm{spt}(n) &= \frac{\sqrt{3}}{\pi \sqrt{24n-1}}e^{\l(n)} + E_s(n),
\end{align*}
where the error term 
\begin{align*}
E_s(n) & := \frac{E(n)}{12} -\frac{24n-1}{12}E_p(n)
\end{align*}
satisfies the bound 
\begin{align*}
|E_s(n)| < (3.59 \times 10^{22})2^{q(n)}(24n-1)^2e^{\l(n)/2}.
%(124000002265)(24n-1)e^{\mu(n)/2}.
\end{align*}
\qed

\vspace{0.10in}

As pointed out by Bessenrodt and Ono \cite{BO}, it is straightforward to obtain from Theorem \ref{Lehmer} that
\begin{equation*}
\frac{\sqrt{3}}{12n}\left(1-\frac{1}{\sqrt{n}}\right)e^{\l(n)}<p(n)<\frac{\sqrt{3}}{12n}\left(1+\frac{1}{\sqrt{n}}\right)e^{\l(n)}
\end{equation*}
for all $n \geq 1$.  

We will use Theorem \ref{riad} to prove the following analogous statement for $\spt(n)$, 
where $\sqrt{n}$ is replaced by any positive integral power of $n$.

\begin{theorem}\label{bounds}
For each $\alpha \in \Z^+$ and $k \in \Z^+$, there is an effectively computable positive integer $B_k(\alpha)$ such that for all $n \geq B_k(\alpha)$, 
we have 
\begin{align*}
\frac{\sqrt{3}}{\pi\sqrt{24n-1}}\left(1-\frac{1}{\alpha n^k}\right)e^{\l(n)} < \spt(n) < \frac{\sqrt{3}}{\pi\sqrt{24n-1}}\left(1+\frac{1}{\alpha n^k}\right)e^{\l(n)}.
\end{align*}
\end{theorem}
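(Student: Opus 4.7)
The plan is to reduce the desired two-sided inequality to an effective bound comparing the error term of Theorem~\ref{riad} with the main term of Theorem~\ref{riad}. Let
\begin{equation*}
M(n):=\frac{\sqrt{3}}{\pi\sqrt{24n-1}}e^{\l(n)},
\end{equation*}
so that Theorem~\ref{riad} reads $\spt(n)=M(n)+E_s(n)$. The inequality to be proved is equivalent to
\begin{equation*}
|\spt(n)-M(n)|<\frac{M(n)}{\alpha n^k},
\end{equation*}
and by Theorem~\ref{riad} it suffices to prove
\begin{equation*}
(3.59\times 10^{22})\,2^{q(n)}(24n-1)^2 e^{\l(n)/2}<\frac{\sqrt{3}}{\pi\sqrt{24n-1}\,\alpha n^k}\,e^{\l(n)}.
\end{equation*}

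I would next rearrange this to isolate the dominant exponential: taking logarithms, the inequality is equivalent to
\begin{equation*}
\frac{\l(n)}{2}>\log\!\left(\frac{(3.59\times 10^{22})\,\pi\,\alpha}{\sqrt{3}}\right)+(\log 2)\,q(n)+\tfrac{5}{2}\log(24n-1)+k\log n.
\end{equation*}
Now $\l(n)/2=\pi\sqrt{24n-1}/12$ grows like $\sqrt{n}$, while the right-hand side grows only like $\log n$ together with the contribution $(\log 2)q(n)$. Since $q(n)=\log(24n-1)/|\log\log(24n-1)-1.1714|$ is $O(\log n/\log\log n)$, the term $(\log 2)q(n)$ is $o(\log n\cdot\log n)$ and is in any case dominated by $\sqrt{n}$. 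Hence the inequality holds for all sufficiently large $n$, and because every constant in sight is explicit, the threshold is effectively computable.

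Finally, I would package this as follows: define $B_k(\alpha)$ to be the smallest positive integer $N$ such that the displayed logarithmic inequality holds for all $n\geq N$, which can be located in practice by a finite search once $\alpha$ and $k$ are fixed. Then for $n\geq B_k(\alpha)$ one has $|E_s(n)|<M(n)/(\alpha n^k)$, which gives both the upper and lower bound in the statement simultaneously.

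\textbf{Main obstacle.} There is no deep obstacle; the entire content is in knowing that $2^{q(n)}$ grows subpolynomially, so that the polynomial prefactor $2^{q(n)}(24n-1)^{5/2}n^k$ on the error side is dwarfed by the $e^{\l(n)/2}$ gap between the main term and the effective error bound of Theorem~\ref{riad}. The only mildly delicate point is keeping track of the explicit constants so that $B_k(\alpha)$ is truly effectively computable, as required by the statement; this is straightforward because Theorem~\ref{riad} already supplies an explicit constant and an explicit expression for $q(n)$.
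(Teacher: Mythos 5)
Your proposal is correct and follows essentially the same route as the paper: both reduce the statement to the single inequality $(3.59\times 10^{22})\,2^{q(n)}(24n-1)^2 e^{\l(n)/2} < \frac{\sqrt{3}}{\pi\sqrt{24n-1}}\frac{1}{\alpha n^k}e^{\l(n)}$ furnished by Theorem \ref{riad} and then invoke the exponential gap (with all constants explicit) to get an effectively computable threshold $B_k(\alpha)$. Your added logarithmic comparison merely spells out the step the paper dismisses with ``clearly,'' so there is nothing to correct.
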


\begin{proof} By Theorem \ref{riad} we have the bounds 
\begin{equation*}
\frac{\sqrt{3}}{\pi\sqrt{24n-1}}e^{\l(n)}-\left|E_s(n)\right|<\spt(n)<\frac{\sqrt{3}}{\pi\sqrt{24n-1}}e^{\l(n)}+\left|E_s(n)\right|,
\end{equation*}
where
\begin{equation*}
\left|E_s(n)\right| < (3.59 \times 10^{22})2^{q(n)}(24n-1)^2e^{\l(n)/2}.
%(124000002265)(24n-1)e^{\m(n)/2}.
\end{equation*}
Clearly, there is an effectively computable positive integer $B_{k}(\alpha)$ such that the inequality  
\begin{align*}
%\label{lambdaineq}
 (3.59 \times 10^{22})2^{q(n)}(24n-1)^2e^{\l(n)/2} < \frac{\sqrt{3}}{\pi\sqrt{24n-1}} \frac{1}{\alpha n^k}e^{\l(n)}
\end{align*}
holds for all $n \geq B_{k}(\alpha)$. For instance, if $\alpha=k=1$ then $B_{1}(1)=5729$. 
This completes the proof.
\end{proof}

\section{Proof of Theorem \ref{better}}\label{s3}

By Theorem \ref{riad} and Lemma \ref{plemma}, we may write 
\begin{align}\label{eqn1}
\textrm{spt}(n)&=\alpha(n)e^{\l(n)} + E_s(n)
\end{align}
and 
\begin{align}\label{eqn2}
p(n)&=\beta(n)e^{\l(n)} + E_p(n),
\end{align}
where 
\begin{align*}
\alpha(n):=\frac{\sqrt{3}}{\pi \sqrt{24n-1}}, \quad \beta(n):=\frac{2\sqrt{3}}{24n-1}\left(1 - \frac{6}{\pi \sqrt{24n-1}}\right).
\end{align*}
Also, for $\epsilon > 0$ we define 
\begin{align*}
\gamma(n):=\frac{\sqrt{6}}{\pi}\sqrt{n}, \quad \gamma(n,\epsilon):=\left(\frac{\sqrt{6}}{\pi} + \epsilon\right)\sqrt{n}.
\end{align*}
We must prove that there exists an effectively computable positive constant $N(\epsilon) > 0$ such that for all 
$n \geq N(\epsilon)$, we have
\begin{align}\label{chenbound2}
\gamma(n) p(n) < \textrm{spt}(n) < \gamma(n, \epsilon)p(n).
\end{align}

First, using (\ref{eqn1}) and (\ref{eqn2}) we find that 
the lower bound in (\ref{chenbound2}) is equivalent to 
\begin{align}\label{lower}
c_1(n)e^{\l(n)} > \gamma(n)E_p(n) - E_s(n), 
\end{align}
where $c_1(n):=\alpha(n)-\beta(n)\gamma(n)$. Now, the error bounds in Theorem \ref{riad} and Lemma \ref{plemma} imply that 
\begin{align*}
\left|\gamma(n)E_p(n) - E_s(n)\right| \leq c_2(n)e^{\l(n)/2},
\end{align*}
where 
\begin{align*}
c_2(n):=(1313)\gamma(n) + (3.59 \times 10^{22})2^{q(n)}(24n-1)^2.
%(124000002265)(24n-1). 
\end{align*}
Then noting that $c_1(n) > 0$ for all $n \geq 1$, we find that (\ref{lower}) 
is implied by the bound
\begin{align*}
e^{\l(n)/2} > c_3(n):=\frac{c_2(n)}{c_1(n)}, 
\end{align*}
or equivalently, the bound
\begin{align}\label{lower2}
n > \frac{1}{24}\left[\left(\frac{12}{\pi}\log(c_3(n))\right)^2 + 1\right].
\end{align}
A calculation shows that (\ref{lower2}) holds for all $n \geq N:=5310$. 

Similarly, using (\ref{eqn1}) and (\ref{eqn2}) we find that 
the upper bound in (\ref{chenbound2}) is equivalent to 
\begin{align}\label{upper}
c_4(n, \epsilon)e^{\l(n)} > E_s(n) - \gamma(n,\epsilon)E_p(n), 
\end{align}
where $c_4(n,\epsilon):=\beta(n)\gamma(n,\epsilon)-\alpha(n)$. The error bounds in Theorem \ref{riad} and Lemma \ref{plemma} imply that 
\begin{align*}
|E_s(n) - \gamma(n, \epsilon)E_p(n)| \leq c_5(n, \epsilon)e^{\l(n)/2},
\end{align*}
where 
\begin{align*}
c_5(n, \epsilon):=(1313)\gamma(n, \epsilon) + (3.59 \times 10^{22})2^{q(n)}(24n-1)^2.
%+ (124000002265)(24n-1).
\end{align*} 
Moreover, there exists an effectively computable positive constant $N_1(\epsilon) > 0$ such that $c_4(n, \epsilon) > 0$ for all $n \geq N_1(\epsilon)$. 
Then arguing as above, we find that if $n \geq N_1(\epsilon)$, the bound (\ref{upper}) is implied by the bound
\begin{align}\label{upper2}
n > \frac{1}{24}\left[\left(\frac{12}{\pi}\log(c_6(n, \epsilon))\right)^2 + 1\right],
\end{align}
where $c_6(n,\epsilon):=c_5(n, \epsilon)/c_4(n,\epsilon)$. Clearly, there exists an effectively computable positive constant $N_2(\epsilon) \geq N_1(\epsilon)$ 
such that (\ref{upper2}) holds for all $n \geq N_2(\epsilon)$.   

Let $N(\epsilon):=\max\{N, N_2(\epsilon)\}$. Then the inequalities (\ref{chenbound2}) hold for all $n \geq N(\epsilon)$. \qed

\section{Proof of Theorem \ref{main}}\label{s4}

\subsection{Proof of Conjecture (\ref{1})}  Let $\epsilon=1-\sqrt{6}/\pi$ in Theorem \ref{better}. We need to  
determine the constant $N(1-\sqrt{6}/\pi)$. A  
calculation shows that the inequality $c_4(n, 1-\sqrt{6}/\pi) > 0$ holds if $n \geq N_1(1-\sqrt{6}/\pi)$ where $N_1(1-\sqrt{6}/\pi)=4$. 
Next, we need to find the smallest positive integer $N_2(1-\sqrt{6}/\pi) \geq  4$ such that the bound 
\begin{align*}
%\label{upper3}
n > \frac{1}{24}\left[\left(\frac{12}{\pi}\log(c_6(n, 1-\sqrt{6}/\pi))\right)^2 + 1\right]
\end{align*}
holds for all $n \geq N_2(1-\sqrt{6}/\pi)$. A calculation shows that this constant is given by $N_2(1-\sqrt{6}/\pi)=4845$.
We now have 
\begin{align*}
N(1-\sqrt{6}/\pi):=\max\{N, N_2(1-\sqrt{6}/\pi)\}=\max \{5310, 4845\}=5310.
\end{align*} 
Therefore, the inequalities 
\begin{align*}
\frac{\sqrt{6}}{\pi} \sqrt{n} p(n) < \mathrm{spt}(n) < \sqrt{n} p(n)
\end{align*}
hold for all $n \geq 5310$. Finally, one can verify with a computer that these inequalities also hold 
for $5 \leq n < 5310$. \qed

\subsection{Proof of Conjecture (\ref{2})} We follow closely the proof of \cite[Theorem 2.1]{BO}. By taking $\alpha=k=1$ in Theorem \ref{bounds} (recall 
that $B_1(1)=5729$), we find that
\begin{equation}\label{boundseq}
\frac{\sqrt{3}}{\pi\sqrt{24n-1}}\left(1-\frac{1}{n}\right)e^{\l(n)}< \spt(n) <\frac{\sqrt{3}}{\pi\sqrt{24n-1}}\left(1+\frac{1}{n}\right)e^{\l(n)}
\end{equation}
holds for all $n \geq 5729$.  One can verify with a computer that \eqref{boundseq} also holds for $1 \leq n < 5729$.  

Now, assume that $1 < a \leq b$, and let $b=Ca$ where $C \geq 1$. From (\ref{boundseq}) we get the inequalities
\begin{align*}
\spt(a)\spt(Ca) > \frac{3}{\pi^2 \sqrt{24a-1}\sqrt{24Ca-1}}\left(1-\frac{1}{a}\right)\left(1-\frac{1}{Ca}\right)e^{\l(a)+\l(Ca)}
\end{align*}
and
\begin{align*}
\spt(a+Ca) < \frac{\sqrt{3}}{\pi\sqrt{24(a+Ca)-1}}\left(1+\frac{1}{a+Ca}\right)e^{\l(a+Ca)}.
\end{align*}
Hence, for all but finitely many cases, it suffices to find conditions on $a>1$ and $C\geq1$ such that
\begin{equation}\label{Tinequality1}
e^{\l(a)+\l(Ca)-\l(a+Ca)} > 
\frac{ \pi \sqrt{24a-1}\sqrt{24Ca-1}}{\sqrt{3} \sqrt{24(a+Ca)-1}}\cdot\frac{\left(1+\frac{1}{a+Ca}\right)}{\left(1-\frac{1}{a}\right)\left(1-\frac{1}{Ca}\right)}.
\end{equation}

For convenience, define 
\begin{equation*}
T_a(C):=\l(a)+\l(Ca)-\l(a+Ca)\hspace{.5cm}\text{and}\hspace{.5cm}S_a(C):=\frac{\left(1+\frac{1}{a+Ca}\right)}{\left(1-\frac{1}{a}\right)\left(1-\frac{1}{Ca}\right)}.
\end{equation*}
Then by taking logarithms, we find that (\ref{Tinequality1}) is equivalent to 
\begin{equation}\label{T>S}
T_a(C)>\log\left(\frac{\pi \sqrt{24a-1}\sqrt{24Ca-1}}{\sqrt{3}\sqrt{24(a+Ca)-1}}\right)+\log(S_a(C)).
\end{equation}
As functions of $C$, it can be shown that $T_a(C)$ is increasing and $S_a(C)$ is decreasing for $C\geq1$, and thus
\begin{equation*}
T_a(C)\geq T_a(1)
\end{equation*}
and
\begin{equation*}
\log(S_a(1)) \geq \log (S_a(C)).
\end{equation*}
Hence it suffices to show that
\begin{equation*}
T_a(1) > \log\left(\frac{\pi \sqrt{24a-1}\sqrt{24Ca-1}}{\sqrt{3}\sqrt{24(a+Ca)-1}}\right)+\log(S_a(1)).
\end{equation*}
Moreover, since
\begin{equation*}
\frac{\sqrt{24Ca-1}}{\sqrt{24(a+Ca)-1}} \leq 1
\end{equation*}
for all $C\geq1$ and all $a>1$, it suffices to show that
\begin{equation}\label{T>S1}
T_a(1) > \log\left(\frac{\pi \sqrt{24a-1}}{\sqrt{3}}\right)+\log (S_a(1)).
\end{equation}
By computing the values $T_a(1)$ and $S_a(1)$, we find that \eqref{T>S1} holds for all $a \geq 6$.  
%Therefore equation (\ref{T>S}) holds for all $a\geq6$. 
 
To complete the proof, assume that $2 \leq a\leq 5$.  For each such integer $a$, we 
calculate the real number $C_a$ for which
\begin{equation*}
T_a\left(C_a\right)=\log\left(\frac{ \pi \sqrt{24a-1}}{\sqrt{3}}\right)+\log (S_a\left(C_a\right)).
\end{equation*}
The values $C_a$ are listed in the table below.
\vspace{.2cm}
\begin{center}
\begin{tabular}{ |c|c| } 
 \hline
$a$ & $C_a$\\ 
 \hline
 \hline
 2 & 27.87\dots\\
 \hline
 3 & 3.54\dots\\
 \hline
 4 & 1.79\dots\\
\hline
5 & 1.20\dots\\
 \hline
\end{tabular}
\end{center}
\vspace{.2cm}
By the discussion above, if $b=Ca\geq a$ is an integer for which $C > C_a$, then (\ref{T>S}) holds, which in turn gives the theorem in these cases.  
Only finitely many cases remain, namely the pairs of integers where $2 \leq a \leq 5$ and $1 \leq b/a \leq C_a$.  We compute 
$\spt(a),\spt(b)$, and $\spt(a+b)$ in these cases to complete the proof.
\qed

\subsection{Proof of Conjecture (\ref{3})}

We require some lemmas and a proposition analogous to those of Desalvo and Pak \cite{PDS} in order to prove the remaining conjectures.  

The following is \cite[Lemma 2.1]{PDS}.
\begin{lemma}\label{pds1}
Suppose $h(x)$ is a positive, increasing function with two continuous derivatives for all $x>0$, and that $h'(x)>0$ is decreasing, 
and $h''(x)<0$ is increasing for all $x>0$.  Then for all $x > 0$, we have 
$$h''(x-1)<h(x+1)-2h(x)+h(x-1)<h''(x+1).$$
\end{lemma}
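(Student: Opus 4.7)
The plan is to use Taylor's theorem with Lagrange remainder to express the second difference $h(x+1) - 2h(x) + h(x-1)$ as an average of values of $h''$ at two intermediate points, and then bound that average using the monotonicity of $h''$.

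More precisely, since $h$ has two continuous derivatives on $(0,\infty)$, Taylor's theorem centered at $x$ supplies points $\xi_1 \in (x, x+1)$ and $\xi_2 \in (x-1, x)$ such that
\begin{align*}
h(x+1) &= h(x) + h'(x) + \tfrac{1}{2} h''(\xi_1),\\
h(x-1) &= h(x) - h'(x) + \tfrac{1}{2} h''(\xi_2).
\end{align*}
Adding these and cancelling the $h'(x)$ terms gives
\[
h(x+1) - 2h(x) + h(x-1) = \tfrac{1}{2}\bigl(h''(\xi_1) + h''(\xi_2)\bigr).
\]

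The second step is to apply the hypothesis that $h''$ is increasing. Since $\xi_2 > x-1$ and $\xi_1 > x > x-1$, we have $h''(\xi_1), h''(\xi_2) > h''(x-1)$, so the average on the right strictly exceeds $h''(x-1)$. Likewise $\xi_1 < x+1$ and $\xi_2 < x < x+1$ force both $h''(\xi_1)$ and $h''(\xi_2)$ to lie strictly below $h''(x+1)$, giving the upper bound. Combining these two inequalities yields
\[
h''(x-1) < h(x+1) - 2h(x) + h(x-1) < h''(x+1),
\]
as desired.

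There is no real obstacle here; the monotonicity and positivity hypotheses on $h$ and $h'$ are not needed for this particular lemma (they are presumably used elsewhere in \cite{PDS} when this lemma is applied). The only genuine input is that $h''$ is continuous (so Taylor's theorem with integral/Lagrange remainder applies on $[x-1, x+1]$) and strictly monotonic. One could equivalently prove the lemma via two applications of the mean value theorem, writing the second difference as $h'(\eta_1) - h'(\eta_2)$ and then applying MVT again to $h'$, but the Taylor approach above is the most direct.
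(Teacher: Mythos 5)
Your argument is correct. Note, however, that the paper itself gives no proof of Lemma \ref{pds1}: it is imported verbatim as \cite[Lemma 2.1]{PDS}, so there is no internal argument to compare against. Your Taylor-with-Lagrange-remainder computation, which writes $h(x+1)-2h(x)+h(x-1)=\tfrac{1}{2}\bigl(h''(\xi_1)+h''(\xi_2)\bigr)$ with $\xi_1\in(x,x+1)$, $\xi_2\in(x-1,x)$, and then squeezes the average using the strict monotonicity of $h''$, is a complete and self-contained proof, and it is essentially equivalent to the standard route (two applications of the mean value theorem, or the double-integral representation of the second difference) that you mention; your observation that the positivity and monotonicity hypotheses on $h$ and $h'$ are superfluous for this inequality is also accurate. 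Two small caveats, neither of which is a gap in your reasoning as opposed to the statement itself: the conclusion only makes sense for $x>1$ (otherwise $h(x-1)$ is not defined, since $h$ lives on $(0,\infty)$), and the strict inequalities require ``increasing'' to be read as strictly increasing for $h''$ --- both conditions hold in the paper's application, where the lemma is applied to $\l(x)$ and $\log(\l(x))$ for $n\geq 2$.
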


By Theorem \ref{riad}, we may write
\begin{equation*}
%\label{sptsum}
\spt(n)=f(n)+E_s(n)
\end{equation*}
where
\begin{equation}\label{sptsumdetails}
f(n):=\frac{\sqrt{3}}{\pi\sqrt{24n-1}}e^{\l(n)} 
\end{equation}
and 
\begin{align*}
|E_s(n)| < (3.59 \times 10^{22})2^{q(n)}(24n-1)^2e^{\l(n)/2}.
\end{align*}

\begin{lemma}\label{pds2}
Let 
\begin{align*}
F(n):=2\log (f(n))-\log (f(n+1))-\log (f(n-1)).
\end{align*} 
Then for all $n\geq4$, we have 
\begin{align*}
\frac{24\pi}{\big(24(n+1)-1\big)^{3/2}}-\frac{1}{n^2}<F(n)<\frac{24\pi}{\big(24(n-1)-1\big)^{3/2}} -\frac{288}{\big(24(n+1)-1\big)^2}.
\end{align*}
%+e^{-\frac{\pi\sqrt{2n}}{\sqrt{3}}}.$$
\end{lemma}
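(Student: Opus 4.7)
The plan is to express $F(n)$ as a combination of two second differences and then apply Lemma \ref{pds1} to each piece. Write
\[
\log f(n) = \log(\sqrt{3}/\pi) + \lambda(n) - \tfrac{1}{2}\log(24n-1),
\]
so that, after the constants cancel in $F(n)$,
\[
F(n) = -\Delta^2 A(n) + \Delta^2 B(n), \quad \Delta^2 X(n) := X(n+1) - 2X(n) + X(n-1),
\]
where $A(n) := \lambda(n) = \tfrac{\pi}{6}\sqrt{24n-1}$ and $B(n) := \tfrac{1}{2}\log(24n-1)$.

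Next I would verify that both $A$ and $B$ satisfy the hypotheses of Lemma \ref{pds1} on the relevant range: each is positive and increasing, each has positive and decreasing first derivative, and each has negative and increasing second derivative. Direct computation gives
\[
A''(n) = -\frac{24\pi}{(24n-1)^{3/2}}, \qquad B''(n) = -\frac{288}{(24n-1)^2}.
\]
Lemma \ref{pds1} then yields
\[
A''(n-1) < \Delta^2 A(n) < A''(n+1), \qquad B''(n-1) < \Delta^2 B(n) < B''(n+1).
\]

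Combining these via $F(n) = \Delta^2 B(n) - \Delta^2 A(n)$, the upper bound is immediate:
\[
F(n) < B''(n+1) - A''(n-1) = \frac{24\pi}{(24(n-1)-1)^{3/2}} - \frac{288}{(24(n+1)-1)^2},
\]
which is exactly the claim. For the lower bound, the same combination gives the intermediate estimate
\[
F(n) > B''(n-1) - A''(n+1) = \frac{24\pi}{(24(n+1)-1)^{3/2}} - \frac{288}{(24(n-1)-1)^2},
\]
and to reach the stated form I only need to replace $\tfrac{288}{(24(n-1)-1)^2}$ by the cleaner $\tfrac{1}{n^2}$. This amounts to verifying that $288 n^2 \le (24n-25)^2$ for $n \ge 4$, i.e.\ $288 n^2 - 1200 n + 625 \ge 0$; the roots of the quadratic are approximately $0.61$ and $3.56$, so the inequality holds precisely on integers $n \ge 4$, matching the range in the statement.

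The main obstacle is essentially a bookkeeping one: ensuring the correct alignment of the shifts $n\pm 1$ in the two second differences when they are subtracted, so that the maximum of $\Delta^2 B$ is paired with the minimum of $\Delta^2 A$ for the upper bound and vice versa for the lower bound. Once that is handled, the only substantive step is the elementary quadratic inequality cutting off the lemma at $n \ge 4$.
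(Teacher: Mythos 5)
Your proof is correct and follows essentially the same route as the paper: decompose $\log f(n)$ into $\lambda(n)$ minus a logarithmic term (your $B$ differs from the paper's $\widetilde{\lambda}=\log\lambda$ only by an irrelevant additive constant), apply Lemma \ref{pds1} to each piece with the shifts aligned exactly as you describe, and then weaken $\frac{288}{(24(n-1)-1)^2}$ to $\frac{1}{n^2}$ via the elementary quadratic inequality valid for $n\geq 4$. The only difference is that you spell out that last quadratic step, which the paper leaves implicit.
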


\begin{proof}
We can write $f(n)$ from (\ref{sptsumdetails}) as 
\begin{align*}
f(n)=\frac{1}{2\sqrt{3}\l(n)}e^{\l(n)},
\end{align*}
so that $$\log (f(n))=\l(n)-\log(\l(n))-\log(2\sqrt{3}).$$  Then we have 
\begin{equation*}
F(n)=2\l(n)-\l(n+1)-\l(n-1)-2\log(\l(n))+\log(\l(n+1))+\log(\l(n-1)).
\end{equation*}
Since the functions $\l(x)$ and $\widetilde{\l}(x):=\log(\l(x))$ satisfy the hypotheses of Lemma \ref{pds1}, we get
\begin{equation*}
-\l''(n+1)+\widetilde{\l}''(n-1) < F(n) < -\l''(n-1) + \widetilde{\l}''(n+1).
%\big(\log(\m(n-1))\big)''<F(n)<-\m''(n-1)+\big(\log(\m(n+1))\big)''.
\end{equation*}
Computing derivatives gives 
\begin{align*}
%\label{prelimineq}
\frac{24\pi}{\big(24(n+1)-1\big)^{3/2}}-\frac{288}{\big(24(n-1)-1\big)^2}<F(n)<\frac{24\pi}{\big(24(n-1)-1\big)^{3/2}}-\frac{288}{\big(24(n+1)-1\big)^2}
\end{align*}
for all $n \geq 2$, from which we deduce that
\begin{equation*}
\frac{24\pi}{\big(24(n+1)-1\big)^{3/2}}-\frac{1}{n^2}<F(n)<\frac{24\pi}{\big(24(n-1)-1\big)^{3/2}} - \frac{288}{\big(24(n+1)-1\big)^2} 
%+e^{-\frac{\pi\sqrt{2n}}{\sqrt{3}}}
\end{equation*}
for all $n\geq4$.
\end{proof}

\begin{lemma}\label{pds3}
Define the functions $y_n:=\left|E_s(n)\right|/f(n)$, 
\begin{align*}
M(n):=2\sqrt{3}(3.59 \times 10^{22})\l(n)2^{q(n)}(24n-1)^2 e^{-\l(n)/2},
\end{align*} 
and 
\begin{align*}
g(n):=\frac{M(n)}{1-M(n)}.
\end{align*}
Then for all $n \geq 2$, we have  
\begin{align*}
%\label{logineq1}
\log\left[\frac{\left(1-y_n\right)^2}{\left(1+y_{n+1}\right)\left(1+y_{n-1}\right)}\right] &> -2g(n)-M(n+1)-M(n-1)
\end{align*}
and
\begin{align*}
\log\left[\frac{\left(1+y_n\right)^2}{\left(1-y_{n+1}\right)\left(1-y_{n-1}\right)}\right] &< 2M(n) + g(n+1) + g(n-1).
\end{align*}

\end{lemma}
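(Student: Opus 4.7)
The plan is to reduce both inequalities to standard Taylor series estimates for $\log(1\pm x)$ after first establishing the crucial identification $y_n < M(n)$, which is exactly what allows $M(n)$ and $g(n)$ to serve as majorants for $y_n$ and $y_n/(1-y_n)$ respectively.

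First I would compute $y_n$ directly using Theorem \ref{riad} together with $f(n)=\frac{\sqrt{3}}{\pi\sqrt{24n-1}} e^{\l(n)}$:
\[
y_n \;<\; \frac{(3.59\times 10^{22})\,2^{q(n)}(24n-1)^2 e^{\l(n)/2}}{\frac{\sqrt{3}}{\pi\sqrt{24n-1}}\, e^{\l(n)}} \;=\; \frac{\pi}{\sqrt{3}}\,(3.59\times 10^{22})\,2^{q(n)}(24n-1)^{5/2}\, e^{-\l(n)/2}.
\]
Using $\l(n)=\pi\sqrt{24n-1}/6$, one checks $2\sqrt{3}\,\l(n)=\pi\sqrt{24n-1}/\sqrt{3}$, so the right-hand side collapses to $M(n)$ exactly. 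Hence $y_n<M(n)$, which in the relevant range of $n$ is also less than $1$, justifying the logarithms appearing in the statement.

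For the lower bound, I would apply the elementary inequalities
\[
\log(1-x) > -\frac{x}{1-x}, \qquad \log(1+x) < x \qquad (0<x<1),
\]
which follow from the termwise comparison $-\log(1-x)=\sum_{k\geq1}x^k/k<\sum_{k\geq1}x^k=x/(1-x)$ and the alternating series for $\log(1+x)$. Applying these to the numerator and the two denominator factors gives
\[
\log\!\left[\frac{(1-y_n)^2}{(1+y_{n+1})(1+y_{n-1})}\right] \;>\; -\frac{2 y_n}{1-y_n} - y_{n+1} - y_{n-1}.
\]
Since $x\mapsto x/(1-x)$ is increasing on $[0,1)$, the bound $y_n<M(n)$ yields $y_n/(1-y_n)<M(n)/(1-M(n))=g(n)$, while $y_{n\pm1}<M(n\pm1)$ is immediate; combining these produces the desired lower bound $-2g(n)-M(n+1)-M(n-1)$.

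The upper bound is symmetric: apply $\log(1+x)<x$ to the numerator and $-\log(1-x)<x/(1-x)$ to each denominator factor to get
\[
\log\!\left[\frac{(1+y_n)^2}{(1-y_{n+1})(1-y_{n-1})}\right] \;<\; 2 y_n + \frac{y_{n+1}}{1-y_{n+1}} + \frac{y_{n-1}}{1-y_{n-1}},
\]
then dominate $y_n<M(n)$ and use monotonicity on the last two terms to cap this by $2M(n)+g(n+1)+g(n-1)$. The only genuinely non-symbolic point of the argument is checking that $M(n\pm1)<1$ throughout the relevant range $n\geq 2$ so that $g(n\pm1)$ is finite and positive; this is a straightforward numerical verification since the factor $e^{-\l(n)/2}$ in $M(n)$ dominates the polynomial $\l(n)(24n-1)^{2}2^{q(n)}$ very rapidly, and it is the main (though mild) obstacle beyond the Taylor-series bookkeeping.
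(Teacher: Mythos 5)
Your route is the same as the paper's: bound $y_n<M(n)$ exactly as you do (the identification $2\sqrt{3}\,\lambda(n)(24n-1)^2=\tfrac{\pi}{\sqrt{3}}(24n-1)^{5/2}$ is correct), then apply $\log(1-x)\geq -x/(1-x)$ and $\log(1+x)<x$ together with the monotonicity of $x/(1-x)$ on $[0,1)$. The gap is in the step you dismiss as routine. You claim that $M(n)<1$ throughout the range $n\geq 2$ and that this is ``a straightforward numerical verification since $e^{-\lambda(n)/2}$ dominates rapidly.'' It does not: because of the prefactor $2\sqrt{3}\,(3.59\times 10^{22})$, one has $M(2)$ of size roughly $10^{32}$, and $M(n)\geq 1$ persists for all $n$ up to about $4700$; the paper's own calculation shows $M(n)<1$ holds precisely for $n\geq 4698$. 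Consequently, for $2\leq n\leq 4697$ your argument establishes neither that the logarithms are defined (you deduce $y_n<1$ only via $y_n<M(n)<1$, which is unavailable there) nor the comparison $y_n/(1-y_n)<g(n)$ (when $M(n)>1$ one has $g(n)<0$, so that comparison is false even granting $y_n<1$).

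The paper handles the small-$n$ range by a different device: it proves $M(n)<1$ only for $n\geq 4698$, and for $1\leq n<4698$ it verifies $y_n<1$ by directly computing $\mathrm{spt}(n)$, hence $E_s(n)$ and $y_n$, with a computer; that finite computation, not the decay of $e^{-\lambda(n)/2}$, is what legitimizes the logarithms for small $n$. (Even in the paper, the final comparison against $g(n)$ and $M(n\pm 1)$ is really only forced in the regime $M(n)<1$, which is the regime that matters downstream: Proposition \ref{prop} invokes the lemma only for $n$ in the thousands.) To repair your write-up, either restrict the stated range to $n$ large enough that $M(n-1)<1$, or include the separate finite verification for small $n$ as the paper does; the appeal to rapid exponential decay at $n\geq 2$ is simply not true at the scale set by the constant $3.59\times 10^{22}$.
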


\begin{proof} First observe that for all $n \geq 1$, we have 
\begin{align}\label{Mineq}
0 < y_n=\frac{\left|E_s(n)\right|}{f(n)} < \frac{(3.59 \times 10^{22})2^{q(n)}(24n-1)^2e^{\l(n)/2}}{\frac{1}{2\sqrt{3}\l(n)}e^{\l(n)}} = M(n).
%\frac{(124000002265)(24n-1)\,e^{\m(n)/2}}{\frac{1}{2\sqrt{3}\m(n)}e^{\m(n)}} = M(n). 
\end{align}
The bound $M(n) < 1$ is equivalent to 
\begin{align}\label{Mineq2}
2\sqrt{3}(3.59 \times 10^{22})\l(n)2^{q(n)}(24n-1)^2 < e^{\l(n)/2} .
\end{align}
Clearly, there is an effectively computable positive integer $M_0$ such that the inequality (\ref{Mineq2}) holds for all $n \geq M_0$. 
A calculation shows that (\ref{Mineq2}) holds for all $n \geq M_0$ with $M_0=4698$. On the other hand, 
one can verify with a computer that  
\begin{align*}
\max_{1 \leq n < 4698}y_n < 1.
\end{align*}
Hence $y_n < 1$ for all $n \geq 1$. Then using (\ref{Mineq}) and the inequalities
\begin{align*}
\log(1-x)\geq-\frac{x}{1-x} \quad \textrm{for} \quad 0 < x < 1
\end{align*}
and 
\begin{align*}
\log(1+x) < x \quad \textrm{for} \quad x > 0, 
\end{align*}
we get 
\begin{align*}
\log\left[\frac{\left(1-y_n\right)^2}{\left(1+y_{n+1}\right)\left(1+y_{n-1}\right)}\right]
& =2\log\left(1-y_n\right)-\log\left(1+y_{n+1}\right)-\log\left(1+y_{n-1}\right)\\
& >-2 \frac{y_n}{1-y_n}-y_{n+1}-y_{n-1}\\
& > -2 g(n)  - M(n+1) - M(n-1)
\end{align*}
for all $n \geq 2$. Similarly, we get  
\begin{align*}
\log\left[\frac{\left(1+y_n\right)^2}{\left(1-y_{n+1}\right)\left(1-y_{n-1}\right)}\right] 
&= 2\log(1+y_n) - \log(1-y_{n+1}) - \log(1-y_{n-1})\\ 
& < 2 y_n  + \frac{y_{n+1}}{1-y_{n+1}} + \frac{y_{n-1}}{1-y_{n-1}}\\
& < 2M(n) + g(n+1) + g(n-1)
\end{align*}
for all $n \geq 2$.  
\end{proof}

\begin{proposition}\label{prop}
Let 
\begin{align*}
\spt_2(n):=2\log(\spt(n))-\log(\spt(n+1))-\log(\spt(n-1)).  
\end{align*}
Then we have 
\begin{align*}
\spt_2(n) > \frac{1}{(24n)^{3/2}}
\end{align*}
for all $n \geq 6553$ and 
\begin{align*}
\spt_2(n)<\frac{2}{n^{3/2}}
\end{align*}
for all $n \geq 6445$.
\end{proposition}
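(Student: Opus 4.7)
The plan is to combine the two preparatory lemmas: Lemma \ref{pds2} controls the ``main piece'' $F(n)$ coming from the asymptotic $f(n)$, while Lemma \ref{pds3} controls the logarithmic perturbation induced by the error term $E_s(n)$. Writing $\spt(n)=f(n)(1+\theta_n y_n)$ with $|\theta_n|\le 1$ (valid since $|E_s(n)|=y_n f(n)$ and $y_n<1$ for all $n\ge 1$, as shown in the proof of Lemma \ref{pds3}), one obtains
\begin{align*}
F(n)+\log\!\left[\frac{(1-y_n)^2}{(1+y_{n+1})(1+y_{n-1})}\right]\;\le\;\spt_2(n)\;\le\; F(n)+\log\!\left[\frac{(1+y_n)^2}{(1-y_{n+1})(1-y_{n-1})}\right].
\end{align*}

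For the lower bound, chain the left inequality of Lemma \ref{pds2} with the first inequality of Lemma \ref{pds3} to get
\begin{align*}
\spt_2(n)\;>\;\frac{24\pi}{\bigl(24(n+1)-1\bigr)^{3/2}}-\frac{1}{n^2}-2g(n)-M(n+1)-M(n-1).
\end{align*}
The leading term is asymptotic to $\pi/(\sqrt{24}\,n^{3/2})\approx 0.641/n^{3/2}$, which is much larger than the target $1/(24n)^{3/2}\approx 0.0085/n^{3/2}$. The subtracted $1/n^2$ term is of lower order, and $M(n)$ (hence $g(n)=M(n)/(1-M(n))$) decays like $e^{-\lambda(n)/2}$ up to polynomial factors, so the inequality $\spt_2(n)>1/(24n)^{3/2}$ eventually holds; one then verifies numerically that the first safe threshold is $n=6553$. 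For the upper bound, combine the right inequality of Lemma \ref{pds2} with the second inequality of Lemma \ref{pds3}:
\begin{align*}
\spt_2(n)\;<\;\frac{24\pi}{\bigl(24(n-1)-1\bigr)^{3/2}}-\frac{288}{\bigl(24(n+1)-1\bigr)^2}+2M(n)+g(n+1)+g(n-1),
\end{align*}
whose dominant term is again $\sim 0.641/n^{3/2}$, comfortably below $2/n^{3/2}$; numerical work identifies $n=6445$ as the explicit threshold.

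The main obstacle is not the qualitative estimate but the explicit numerical verification of the two thresholds $6553$ and $6445$. The exponential smallness of $M(n)$ is more than sufficient once $\lambda(n)/2$ outgrows the constants $\log(3.59\times10^{22})$ together with $\log(2^{q(n)}(24n-1)^2)$ and the polynomial gain from $\lambda(n)$ in $M(n)$; this transition happens in exactly the range of a few thousand. Concretely, one bounds $q(n)$ using the elementary estimate $q(n)\le\log(24n-1)/(\log\log(24n-1)-1.1714)$ already used in Theorem \ref{Sprop}, plugs this into the closed-form expression for $M(n)$, and then checks the two single-variable inequalities at (and just before) the claimed thresholds by direct evaluation. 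All other ingredients---the sign analysis of $F(n)$ minus the target, the comparison between $24\pi/(24n)^{3/2}$ and $1/(24n)^{3/2}$ or $2/n^{3/2}$, and the monotonicity in $n$ of the error sums $M(n\pm 1)+g(n)$---are routine once the threshold is pinned down.
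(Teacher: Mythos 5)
Your proposal is correct and follows essentially the same route as the paper: sandwich $\spt(n)$ between $f(n)(1\pm y_n)$, combine the bound on $F(n)$ from Lemma \ref{pds2} with the logarithmic error estimates of Lemma \ref{pds3}, and verify numerically that the resulting explicit lower and upper bounds clear the targets $1/(24n)^{3/2}$ and $2/n^{3/2}$ at the thresholds $6553$ and $6445$. The intermediate inequalities you write down are exactly those in the paper's proof.
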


\begin{proof}
We first bound $\spt(n)$ by
\begin{equation*}
f(n)\left(1-\frac{\left|E_s(n)\right|}{f(n)}\right)<\spt(n)<f(n)\left(1+\frac{\left|E_s(n)\right|}{f(n)}\right).
\end{equation*}
Then recalling that 
\begin{align*}
F(n):=2\log (f(n))-\log (f(n+1))-\log (f(n-1))
\end{align*}
and 
$y_n:=\left|E_s(n)\right|/f(n)$, we take logarithms in the preceding inequalities to get 
\begin{align*}
%\label{prelimspt}
F(n)+\log\left[\frac{\left(1-y_n\right)^2}{\left(1+y_{n+1}\right)\left(1+y_{n-1}\right)}\right]
< \spt_2(n) < F(n)+\log\left[\frac{\left(1+y_n\right)^2}{\left(1-y_{n+1}\right)\left(1-y_{n-1}\right)}\right].
\end{align*}
It follows immediately from Lemmas \ref{pds2} and \ref{pds3} that for all $n \geq 4$, we have
\begin{align*}
\spt_2(n) >  \frac{24\pi}{\big(24(n+1)-1\big)^{3/2}}-\frac{1}{n^2} -2g(n)-M(n+1)-M(n-1)
\end{align*}
and
\begin{align}\label{spt2upper}
\spt_2(n) < \frac{24\pi}{\big(24(n-1)-1\big)^{3/2}} -\frac{288}{\big(24(n+1)-1\big)^2} + 2 M(n) + g(n+1) +g(n-1).
\end{align}
Then a calculation shows that 
\begin{align*}
\frac{24\pi}{\big(24(n+1)-1\big)^{3/2}}-\frac{1}{n^2} -2g(n)-M(n+1)-M(n-1) > \frac{1}{(24n)^{3/2}}
\end{align*}
for all $n \geq 6553$ and 
\begin{align*}
\frac{24\pi}{\big(24(n-1)-1\big)^{3/2}} -\frac{288}{\big(24(n+1)-1\big)^2} +   2 M(n) + g(n+1) +g(n-1) < \frac{2}{n^{3/2}}
\end{align*}
for all $n \geq 6445$. This completes the proof.
\end{proof}

To prove Conjecture (\ref{3}), we must show that $$\spt(n)^2 > \spt(n-1)\spt(n+1)$$ 
for $n \geq 36$. Taking logarithms, 
we see that this is equivalent to $\spt_2(n) > 0.$  By the lower bound in Proposition \ref{prop}, we have $\spt_2(n)>0$ 
for all $n \geq 6553$. Finally, one can verify with a computer that 
$\spt_2(n)>0$ for all $36 \leq n < 6553$. This completes the proof.
\qed

\begin{comment}
As a result of the proof of Conjecture (\ref{3}), we have some information about the roots of certain quadratic polynomials.  
The $k$th index $n$ Jensen polynomial for $\spt(n)$ is defined to be
\begin{equation*}
S_k(n;x)=\sum_{j=1}^k{k\choose j}\spt(n+j)x^j.
\end{equation*}
\begin{conjecture}[Ono]
For each $k\geq1$, all but finitely many $S_k(n;x)$ have all real roots.
\end{conjecture}
When $k=1$, this is clear.  We have that $S_1(n;x)=\spt(n+1)x+\spt(n)$, so the only root is the 
rational number $-\frac{\spt(n)}{\spt(n+1)}$.  When $k=2$, we have the following corollary.

\begin{corollary}
We have that $S_2(n;x)$ has all real roots for all $n\geq36$.
\end{corollary}

\begin{proof}
When $k=2$, we have that
\begin{equation*}
S_2(n;x)=\spt(n+2)x^2+2\spt(n+1)x+\spt(n).
\end{equation*}
By the quadratic formula, we see that the roots are
\begin{equation*}
x=\frac{-\spt(n)\pm\sqrt{\spt(n+1)^2-\spt(n)\spt(n+2)}}{\spt(n+2)}.
\end{equation*}
These roots are both real if and only if $\spt(n+1)^2\geq\spt(n)\spt(n+2)$, which follows from Theorem \ref{main} (\ref{3}).
\end{proof}
\phantom\qedhere
\end{comment}

\subsection{Proof of Conjecture (\ref{4})} We follow closely the proof of \cite[Theorem 5.1]{PDS}. 
Recall that a sequence $\{a(k)\}_{k=0}^{\infty}$ of non-negative integers is log-concave if 
\begin{align*}
a(k)^2 \geq a(k-1)a(k+1)
\end{align*}
for all $k \geq 1$.
Moreover, it is known that log-concavity implies strong log-concavity 
\begin{align*}
a(\ell-i)a(k+i) \geq a(k)a(\ell),
\end{align*}
for all $0\leq k \leq \ell \leq n$ and $0 \leq i \leq \ell - k$ (see e.g. \cite{S}). 

Now, we have proved that 
\begin{align*}
\spt(n)^2 > \spt(n-1)\spt(n+1)
\end{align*}
for all $n \geq 36$. Therefore, if we take $k=n-m$, $\ell=n+m$, and $i=m$, then 
\begin{equation*}
\spt(n)^2 > \spt(n-m)\spt(n+m)
\end{equation*}
for all  $n>m>1$ with $n-m > 36$. 

We next consider the case $n>m>1$ with $1 \leq n-m \leq 36$. We will prove that 
\begin{align}\label{string}
\spt(n)^2 \geq \spt(m+1)^2  > \spt(36)\spt(36+2m) \geq \spt(n-m)\spt(n+m)
\end{align}
for all $1 \leq n-m \leq 36$ with $m \geq 6244$. On the other hand, one can verify with a computer that 
\begin{align*}
\spt(n)^2 > \spt(n-m)\spt(n+m) 
\end{align*} 
for all $1 \leq n-m \leq 36$ with $m < 6244$. This completes the proof of Conjecture (\ref{4}), subject to verifying the inequalities (\ref{string}).

Since $n \geq m+1$, we have
\begin{align*}
%\label{final2}
\spt(n)^2 \geq \spt(m+1)^2.
\end{align*}
Moreover, since $n-m \leq 36$ we have 
\begin{align*}
%\label{final1}
\spt(n-m)< \spt(36),
\end{align*}
and thus 
\begin{align*}
%\label{final3}
\spt(36)\spt(36+2m) \geq \spt(n-m)\spt(n+m).
\end{align*}
This verifies the first and third inequalities in (\ref{string}). 

It remains to prove that 
\begin{align}\label{sptlast}
\spt(m+1)^2 > \spt(36)\spt(36+2m)
\end{align}
for all $m \geq 6244$.  Taking logarithms in (\ref{sptlast}), we see that it suffices to prove 
\begin{align}\label{sptlast2}
2\log (\spt(m+1)) - \log (\spt(36)) - \log (\spt(36+2m)) > 0
\end{align}
for all $m \geq 6244$. By \cite[Section 2]{HR} and \cite[(4)]{E}, respectively, we have the lower and upper bounds 
\begin{align*}
\frac{e^{2\sqrt{m}}}{2\pi m e^{1/6m}} < p(m) < e^{\pi \sqrt{(2/3)m}}
\end{align*}
for all $m \geq 1$. Then by the inequality stated in Conjecture (\ref{1}) (which is true by Theorem \ref{main}), we have
\begin{equation}\label{sptsqueeze}
\frac{\sqrt{6}}{\pi} \sqrt{m} \frac{e^{2\sqrt{m}}}{2\pi me^{1/6m}} < \spt(m) < \sqrt{m}e^{\pi \sqrt{(2/3)m}}
\end{equation}
for all $m \geq 5$. Using the inequalities (\ref{sptsqueeze}) and $\spt(36)<90000$, we see that the left hand side of (\ref{sptlast2}) is 
bounded below by the function
%\small
\begin{equation*}
2\log\left(\frac{\sqrt{6(m+1)}}{2\pi^2(m+1)e^{1/6(m+1)}}\right)+4\sqrt{m+1}-\log(90000)-\log(\sqrt{36+2m})-\frac{\pi\sqrt{2(36+2m)}}{\sqrt{3}}
\end{equation*}
for all $m \geq 4$. A calculation shows that this function is positive for all $m \geq 6244$.  \qed

\subsection{Proof of Conjecture (\ref{5})}

Taking logarithms, we find that Conjecture (\ref{5}) is equivalent to  
$$\spt_2(n)<\log\left(1+\frac{1}{n}\right)$$  
for all $n \geq 13$. By the upper bound in Proposition \ref{prop} and some straightforward estimates, 
we have $$\spt_2(n)<\frac{2}{n^{3/2}}<\frac{1}{n+1}<\log\left(1+\frac{1}{n}\right)$$ for all $n \geq 6445$.  Finally, 
one can verify with a computer that the conjectured inequality 
holds for all $13 \leq n < 6445$. This completes the proof.
\qed

\subsection{Proof of Conjecture (\ref{6})} We follow closely the proof of \cite[Conjecture 1.3]{CWX}. 
Taking logarithms, we find that Conjecture (\ref{6}) is equivalent to
\begin{equation*}
\spt_2(n)<\log\left(1+\frac{\pi}{\sqrt{24}n^{3/2}}\right)
\end{equation*}
for all $n \geq 73$. By (\ref{spt2upper}) we have 
\begin{align*}
\spt_2(n) < \frac{24\pi}{\big(24(n-1)-1\big)^{3/2}} -\frac{288}{\big(24(n+1)-1\big)^2} + 2 M(n) + g(n+1) +g(n-1)
\end{align*}
for all $n \geq 4$. On the other hand, by \cite[(2.3)]{CWX} we have
\begin{equation*}
\frac{24\pi}{\big(24(n+1)-1\big)^{3/2}}<\frac{24\pi}{(24n)^{3/2}}-\left(\frac{24\pi}{(24n)^{3/2}}\right)^2+\frac{3}{2n^{5/2}}
\end{equation*}
for all $n \geq 50$, and by \cite[(2.23)]{CWX} we have
\begin{equation*}
-\frac{288}{\big(24(n+1)-1\big)^2}<\frac{1}{6n^{5/2}}-\frac{1}{2n^2}
\end{equation*}
for all $n \geq 50$.  Therefore, for all $n \geq 50$ we have 
\begin{equation*}
\spt_2(n)<\frac{24\pi}{(24n)^{3/2}}-\left(\frac{24\pi}{(24n)^{3/2}}\right)^2 + \frac{5}{3n^{5/2}} - \frac{1}{2n^2} + 2 M(n) + g(n+1) +g(n-1).
\end{equation*}
Now, a calculation shows that 
\begin{align*}
\frac{5}{3n^{5/2}} - \frac{1}{2n^2} + 2 M(n) + g(n+1) +g(n-1) < 0
\end{align*}
for all $n \geq 7211$. Hence
\begin{align*}
\spt_2(n) < \frac{24\pi}{(24n)^{3/2}}-\left(\frac{24\pi}{(24n)^{3/2}}\right)^2 = \frac{24\pi}{(24n)^{3/2}}\left(1-\frac{24\pi}{(24n)^{3/2}}\right)
\end{align*}
for all $n \geq 7211$. Then using the inequality 
\begin{align*}
x(1-x)<\log(1+x) \quad \textrm{for} \quad x>0,
\end{align*}
we get 
\begin{align*}
\spt_2(n) < \log\left(1+ \frac{24\pi}{(24n)^{3/2}}\right)=\log\left(1+\frac{\pi}{\sqrt{24}n^{3/2}}\right)
\end{align*}
for all $n \geq 7211$. Finally, one can verify with a computer that this inequality also holds for all $73 \leq n < 7211$. 
This completes the proof. \qed

\end{document}